\theoremstyle{plain}
\newtheorem{conjecture}{Conjecture}
\newtheorem{thm}{Theorem}[section]
\newtheorem{lem}[thm]{Lemma}
\newtheorem{cor}[thm]{Corollary}
\theoremstyle{definition}
\newcommand{\ii}{\mathrm{i}}
\newcommand{\e}{\mathrm{e}}
\newcommand{\ee}{\mathbf{e}}
\newcommand{\zz}{\mathbf{z}}
\newcommand{\yy}{\mathbf{y}}
\newcommand{\C}{\mathds{C}}
\newcommand{\R}{\mathds{R}}
\newcommand{\Z}{\mathds{Z}}
\newcommand{\Q}{\mathds{Q}}
\renewcommand{\j}{\mathbf{j}}
\title{No Laplacian Perfect State Transfer in Trees}
\author{Gabriel Coutinho\footnote{Dep. of Combinatorics and Optimization, University of Waterloo. \newline \texttt{ \{gcoutinho, h228liu\}@uwaterloo.ca}. } \ \footnote{Supported by Capes Foundation, Ministry of Education, Brazil.} \and Henry Liu\footnotemark[1] \ \footnote{Supported by NSERC, Canada.}}  
\date{\today}
\begin{document}

\maketitle

\begin{abstract}
We consider a system of qubits coupled via nearest-neighbour interaction governed by the Heisenberg Hamiltonian. We further suppose that all coupling constants are equal to $1$. We are interested in determining which graphs allow for a transfer of quantum state with fidelity equal to $1$. To answer this question, it is enough to consider the action of the Laplacian matrix of the graph in a vector space of suitable dimension. 

Our main result is that if the underlying graph is a tree with more than two vertices, then perfect state transfer does not happen. We also explore related questions, such as what happens in bipartite graphs and graphs with an odd number of spanning trees. Finally, we consider the model based on the $XY$-Hamiltonian, whose action is equivalent to the action of the adjacency matrix of the graph. In this case, we conjecture that perfect state transfer does not happen in trees with more than three vertices.
\end{abstract}

\section{Introduction}

Let $X$ be a simple undirected graph on $n$ vertices, and let $L = L(X)$ denote its Laplacian matrix. For any $u \in V(X)$, we denote its characteristic vector with respect to the ordering of the rows of $L$ by $\ee_u$. The matrix operator
\[U_L(t) = \exp(\ii t L),\]
defined for every real $t \geq 0$, represents a \textit{continuous-time quantum walk} on $X$. We say that $X$ admits \textit{Laplacian perfect state transfer} from a vertex $u$ to a vertex $v$ if there is a time $t \geq 0$ such that
\[U_L(t) \ee_u = \gamma \ee_v\]
for some $\gamma \in \C$. Here we work under the assumption that the graph $X$ is the underlying network of a system of qubits coupled via nearest-neighbour interaction governed by the Heisenberg Hamiltonian with coupling constants equal to $1$. If such a system is initialized in such a way that the state of the qubit located at the vertex $u$ is orthogonal to the states of each of the other qubits, then Laplacian perfect state transfer in the graph between $u$ and $v$ is equivalent to a transfer of state from the qubit at $u$ to the qubit at $v$ with fidelity $1$ (see Kay~\cite{KayPerfectcommunquantumnetworks}). If we choose the $XY$-Hamiltonian instead, perfect state transfer is defined in terms of $U_A(t) = \exp(\ii t A)$, where $A$ is the adjacency matrix of the graph. There are many similarities between both cases, mostly because both $A$ and $L$ are symmetric integer matrices with a positive eigenvector. If $X$ is $k$-regular, then $L = k I - A$, thus $\exp(\ii t A)$ and $\exp(\ii t L)$ differ only by a constant.

The problem of determining which graphs admit adjacency perfect state transfer has received a considerable amount of attention lately. For example, it was solved for paths and hypercubes (see Christandl et al.~\cite{ChristandlPSTQuantumSpinNet2}), circulant graphs (see Ba\v{s}i\'{c} \cite{BasicCirculant}), cubelike graphs (see Cheung and Godsil \cite{GodsilCheungPSTCubelike}) and distance-regular graphs (see Coutinho et al.~\cite{CoutinhoGodsilGuoVanhove}). The effect of certain graph operations was considered in Angeles-Canul et al.~\cite{Angeles-CanulPSTcirculant}, Bachman et al.~\cite{TamonPSTQuotient} and Ge et al.~\cite{GeGreenbergPerezTamonPSTproducts}. Recent surveys are found in Kendon and Tamon \cite{KendonTamon}, and Godsil \cite{GodsilStateTransfer12}. 

Bose et al.~\cite{BoseCasaccinoManciniSeverini} constructed an infinite family of graphs admitting Laplacian perfect state transfer. Kay \cite{KayPerfectcommunquantumnetworks} observed that some Laplacian eigenvalues must be integers in order for Laplacian perfect state transfer to happen. Here we build upon this observation to solve the problem of determining which trees admit Laplacian perfect state transfer.

Our main result is that, except for the path on two vertices, no tree admits Laplacian perfect state transfer. To achieve that, we first show that, in any graph, Laplacian perfect state transfer does not happen between twin vertices sharing one or two common neighbours. Then we apply the Matrix-Tree Theorem coupled with a precise understanding of Laplacian perfect state transfer to show that such phenomenon could only happen in trees between twin vertices. We also show how our methods can also be applied to other classes of graphs.

In the last sections, we study the adjacency perfect state transfer and we will show that no tree with an invertible adjacency matrix admits perfect state transfer.

The importance of our results is twofold. First, we rule out trees as candidates for graphs in which Laplacian state transfer can be achieved at large distances with relatively few edges. To this date, the best known trade off is obtained with iterated cartesian powers of the path on two vertices. We also progress in determining which trees admit (adjacency) perfect state transfer. Secondly, we succeed in exhibiting interesting connections between classical results in algebraic graph theory and relatively modern applications.

Some results of this paper and an extensive elementary treatment of perfect state transfer can be found in Coutinho \cite[PhD Thesis]{CoutinhoPhD}.

\section{Laplacian Perfect State Transfer}

Throughout this paper, $J$ denotes the all-ones matrix of appropriate size except for the all-ones column vector, which we denote by $\j$.

Consider a graph $X$ with adjacency matrix $A$. Let $D$ be the diagonal matrix whose entries are the degrees of the vertices of $X$. Then the \textit{Laplacian matrix} of $X$ is defined as $L = D - A$. The Laplacian matrix of a graph is positive semidefinite, and the multiplicity of $0$ as an eigenvalue is equal to the number of connected components of the graph. The all-ones vector $\j$ is always an eigenvector for $0$. Because $L$ is symmetric, it admits a spectral decomposition into orthogonal projections onto its eigenspaces, which we will typically denote by
\[L = \sum_{r = 0}^d \lambda_r F_r,\]
with the understanding that $0 = \lambda_0 < \lambda_1 < ... < \lambda_d$.

Suppose $u$ and $v$ are two vertices of $X$ with corresponding characteristic vectors $\ee_u$ and $\ee_v$. We say that an eigenvalue $\lambda_r$ is in the \textit{Laplacian eigenvalue support} of $u$ if $F_r \ee_u \neq 0$. We will denote the eigenvalue support of $u$ by $\Lambda_u$. We also define $\Lambda_{uv}^+$ to be such that $\lambda_r \in \Lambda_{uv}^+$ if and only if $F_r \ee_u = F_r \ee_v$, and correspondingly $\Lambda_{uv}^-$ to be such that $\lambda_r \in \Lambda_{uv}^-$ if and only if $F_r \ee_u = - F_r \ee_v$. Note that $\Lambda_{uv}^+$ and $\Lambda_{uv}^-$ are disjoint subsets of $\Lambda_u$.

The following result is a compilation of many well known facts about perfect state transfer, written in the context of the Laplacian matrix. It explicitly states which conditions on the spectral structure of the Laplacian and on the parity of its eigenvalues are equivalent to perfect state transfer.

\begin{thm}\label{pst-char}
Let $X$ be a graph, $u,v \in V(X)$. Let $L$ be the Laplacian matrix of $X$ admitting spectral decomposition $L = \sum_{r = 0}^d \lambda_r F_r$ with $0 = \lambda_0 < \lambda_1 < ... < \lambda_d$. There exist a positive $t \in \R$ and a $\gamma \in \C$ satisfying
\[\exp (\ii t L) \ee_u = \gamma \ee_v\]
if and only if all of the following conditions hold.
\begin{enumerate}[(i)]
\item \label{pst-char:item1} For all $r \in \{0,...,d\}$, $F_r \ee_u = \pm F_r \ee_v$. In particular, $\Lambda_u = \Lambda_v$.
\item \label{pst-char:item2} Elements in $\Lambda_u$ are integers.
\item \label{pst-char:item3} Let $\displaystyle g = \gcd \left\{ \lambda_r  : \lambda_r \in \Lambda_u\right\}$. Then 
\begin{enumerate}[a)]
\item $\lambda_r \in \Lambda_{uv}^+$ if and only if $\dfrac{\lambda_r}{g}$ is even, and
\item $\lambda_r \in \Lambda_{uv}^-$ if and only if $\dfrac{\lambda_r}{g}$ is odd. \qed
\end{enumerate} 
\end{enumerate} 
\end{thm}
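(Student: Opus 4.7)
The strategy is to analyze the PST equation projection-by-projection using the spectral decomposition. Substituting $L = \sum_r \lambda_r F_r$ into both sides of $\exp(\ii t L)\ee_u = \gamma \ee_v$ and using orthogonality of the eigenspaces gives
\[e^{\ii t \lambda_r} F_r \ee_u = \gamma F_r \ee_v\]
for every $r$. Reading off the $r=0$ component forces $\gamma = 1$, since $F_0\ee_u = F_0\ee_v = \j/n$ once condition (i) confirms that $u$ and $v$ lie in the same component. Because each $F_r$ has real entries, $F_r \ee_u$ and $F_r \ee_v$ are real vectors, so on the support of $u$ the scalar $e^{\ii t \lambda_r}$ must be real, hence $\pm 1$. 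This immediately gives (i) together with the pairing of signs with membership in $\Lambda_{uv}^{\pm}$.

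The crux is (ii). From $e^{\ii t \lambda_r} = \pm 1$ we obtain $t\lambda_r \in \pi\Z$, so all ratios $\lambda_r/\lambda_s$ with $r,s \in \Lambda_u\setminus\{0\}$ are rational. Since the $\lambda_r$ are algebraic integers (roots of the integer characteristic polynomial of $L$), it suffices to show they are rational. I would invoke the standard fact that the support $\Lambda_u$ is Galois-closed: each $F_r$ is a polynomial in $L$ with coefficients in a splitting field $K$ of $L$, and any $\sigma \in \mathrm{Gal}(K/\Q)$ fixes $L$ and $\ee_u$ while permuting the idempotents via $\sigma(F_r) = F_{\sigma(r)}$. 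For $\lambda_r \in \Lambda_u\setminus\{0\}$, rationality of the ratios gives $\sigma(\lambda_r) = q\lambda_r$ with $q \in \Q$; iterating $\sigma$ inside the finite Galois group forces $q^n = 1$, so $q = \pm 1$, and positivity of nonzero Laplacian eigenvalues rules out $q=-1$. Hence every $\sigma$ fixes $\lambda_r$, so $\lambda_r \in \Q$ and therefore $\lambda_r \in \Z$.

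With (ii) in hand, write $t\lambda_r = \pi n_r$ with $n_r \in \Z$, so $n_r$ is even precisely when $r \in \Lambda_{uv}^+$. Setting $\mu_r = \lambda_r/g$, the common value $n_r/\lambda_r = t/\pi$ together with $\gcd\{\mu_r\} = 1$ forces $t/\pi = d/g$ for some $d \in \Z$, whence $n_r = d\mu_r$. Assuming $u \neq v$, the case $d$ even is impossible (it would give $\exp(\ii t L)\ee_u = \ee_u$), so $d$ is odd, $n_r \equiv \mu_r \pmod 2$, and (iii) follows. The converse is a direct verification: set $t = \pi/g$, compute $e^{\ii t \lambda_r} = (-1)^{\mu_r}$ on $\Lambda_u$, and use (i) and (iii) term by term to conclude $e^{\ii t \lambda_r} F_r \ee_u = F_r \ee_v$ for every $r$, hence $\exp(\ii t L)\ee_u = \ee_v$.

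The only substantive obstacle is the Galois step in (ii); the remainder is spectral bookkeeping. Ruling out irrational eigenvalues in $\Lambda_u$ genuinely requires both the algebraic-graph-theoretic fact that supports are Galois-closed and the positive semidefiniteness of $L$, which is where the Laplacian structure (as opposed to that of a generic integer symmetric matrix) enters essentially.
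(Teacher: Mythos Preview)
The paper does not prove this theorem: it is presented as ``a compilation of many well known facts about perfect state transfer,'' and the \qed\ at the end of the statement signals that no argument is given. Your proposal is therefore not being compared against a proof in the paper but against the folklore argument implicit in the cited sources (Kay, Godsil, and the first author's thesis). Judged on its own, your argument is correct and is essentially that standard argument: project the PST equation onto each eigenspace, read off $\gamma=1$ from the $\lambda_0=0$ component, deduce $e^{\ii t\lambda_r}=\pm 1$ on the support and hence strong cospectrality, obtain integrality via Galois-closure of $\Lambda_u$ together with positivity of nonzero Laplacian eigenvalues, and finally extract the parity condition from $t\lambda_r\in\pi\Z$ and $\gcd\{\lambda_r/g\}=1$. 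One small point worth making explicit: your step ``$d$ even is impossible'' tacitly assumes $u\neq v$, which the theorem statement does not say but clearly intends, since for $u=v$ condition~(iii) cannot hold (all of $\Lambda_u$ would lie in $\Lambda_{uv}^+$, forcing every $\lambda_r/g$ to be even, contradicting $\gcd=1$).
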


\section{Size of the Eigenvalue Support and Twin Vertices}

Let $N(u)$ denote the set of neighbours of $u$ in $X$. We say that $u$ and $v$ are \textit{twin vertices} if
\[N(u) \backslash v = N(v) \backslash u.\]
Note that twin vertices may or may not be adjacent.
\begin{center}
\includegraphics{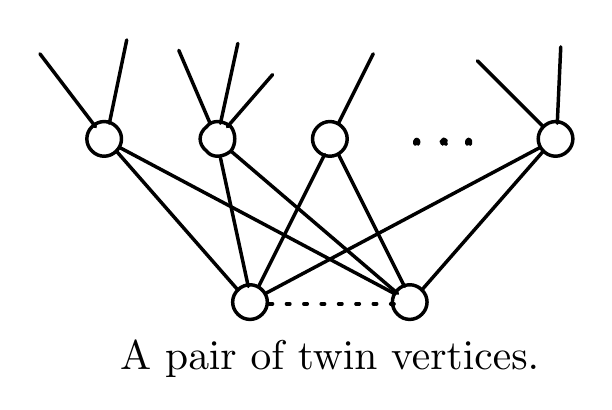}
\end{center}

Our first original contribution to the study of Laplacian perfect state transfer is the following lemma.

\begin{lem}\label{cospec-sizes}
Let $X$ be a connected graph on $n > 2$ vertices admitting Laplacian perfect state transfer between $u$ and $v$. Then ${|\Lambda_{uv}^+| \geq 2}$ and ${|\Lambda_{uv}^-| \geq 1}$. If ${|\Lambda_{uv}^-| = 1}$, then $u$ and $v$ are twins.
\end{lem}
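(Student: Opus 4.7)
The plan is to split the conclusions of the lemma using the spectral decomposition $\ee_u = \sum_r F_r \ee_u$ together with condition (\ref{pst-char:item1}) of Theorem~\ref{pst-char}. Since $F_r\ee_u = \pm F_r\ee_v$, grouping by sign gives the identities
\[\ee_u + \ee_v = 2\sum_{\lambda_r \in \Lambda_{uv}^+} F_r\ee_u, \qquad \ee_u - \ee_v = 2\sum_{\lambda_r \in \Lambda_{uv}^-} F_r\ee_u,\]
so each bound will correspond to interpreting a short sum on the right.

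For the bound on $|\Lambda_{uv}^+|$, I would first observe that $\lambda_0 = 0$ always lies in $\Lambda_{uv}^+$: indeed, since $X$ is connected, the corresponding spectral idempotent is $F_0 = \tfrac{1}{n}J$, and $F_0 \ee_u = \tfrac{1}{n}\j = F_0\ee_v \neq 0$. If $\Lambda_{uv}^+ = \{\lambda_0\}$, the first identity yields $\ee_u + \ee_v = \tfrac{2}{n}\j$. Comparing entries at a vertex outside $\{u,v\}$ (which exists because $n > 2$) gives $2/n = 0$, a contradiction. Hence $|\Lambda_{uv}^+| \geq 2$. For $|\Lambda_{uv}^-| \geq 1$, the second identity combined with $\ee_u \neq \ee_v$ immediately forces $\Lambda_{uv}^-$ to be nonempty.

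The substantive step, and the main obstacle, is the twin conclusion. Assume $\Lambda_{uv}^- = \{\lambda_s\}$. Then the second identity becomes $\ee_u - \ee_v = 2F_s\ee_u$, and using $F_s\ee_v = -F_s\ee_u$ one sees that $\ee_u - \ee_v$ itself lies in the $\lambda_s$-eigenspace of $L$, i.e.\ $L(\ee_u - \ee_v) = \lambda_s(\ee_u - \ee_v)$. Expanding using $L\ee_w = d(w)\ee_w - \sum_{x \sim w}\ee_x$ and comparing coefficients vertex by vertex will force strong constraints. I would split into the two cases $u \not\sim v$ and $u \sim v$: in each case, comparing the coefficient of $\ee_w$ for $w \notin \{u,v\}$ yields $[w \sim u] = [w \sim v]$, which is exactly the twin condition $N(u)\setminus\{v\} = N(v)\setminus\{u\}$; meanwhile the coefficients at $u$ and $v$ pin down $\lambda_s$ as $d(u)$ or $d(u)+1$ and force $d(u) = d(v)$, which is consistent with (and implied by) the twin condition.

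The only delicate point is tracking the $\pm 1$ shifts that appear at the diagonal entries when $u \sim v$ (because then $\ee_v$ appears in $\sum_{x \sim u}\ee_x$ and vice versa); handling these carefully is what keeps the adjacent and non-adjacent cases parallel and leads uniformly to the conclusion that $u$ and $v$ are twins.
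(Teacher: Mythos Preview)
Your proposal is correct and follows essentially the same approach as the paper: both derive the identities $\ee_u + \ee_v = 2\sum_{\lambda_r \in \Lambda_{uv}^+} F_r\ee_u$ and $\ee_u - \ee_v = 2\sum_{\lambda_r \in \Lambda_{uv}^-} F_r\ee_u$ from Theorem~\ref{pst-char}(\ref{pst-char:item1}), use $F_0 = \tfrac{1}{n}J$ to force $|\Lambda_{uv}^+| \geq 2$ when $n>2$, and conclude the twin property by observing that $\ee_u - \ee_v$ is itself a $\lambda_s$-eigenvector when $|\Lambda_{uv}^-|=1$. Your explicit coefficient comparison (with the $u \sim v$ versus $u \not\sim v$ case split) just unpacks what the paper states in one sentence.
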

\begin{proof}
Define
\begin{align}\zz^+ = \sum_{\lambda_r \in \Lambda_{uv}^+} F_r\ee_u \quad \text{and} \quad \zz^- = \sum_{\lambda_r \in \Lambda_{uv}^-} F_r \ee_u. \label{eq:1}\end{align}
Because of Theorem \ref{pst-char}.(\ref{pst-char:item1}), it follows that $\ee_u = \zz^+ + \zz^-$ and $\ee_v = \zz^+ - \zz^-$. Hence
\begin{align}\zz^+ = \frac{1}{2}\left(\ee_u + \ee_v\right) \quad \text{and} \quad \zz^- =\frac{1}{2}\left(\ee_u - \ee_v\right). \label{eq:2}\end{align}
The eigenvalue $0$ is in $\Lambda_{uv}^+$. Suppose it is the only eigenvalue in $\Lambda_{uv}^+$. Its corresponding eigenspace is spanned by $\j$, thus $\zz^+ = (1/n)\j$, which is only possible if $n = 2$. 

Now suppose that there is only one eigenvalue $\lambda$ in $\Lambda_{uv}^-$. Then $\zz^-$ is an eigenvector for $\lambda$. As a consequence, any vertex $w \neq v$ that is a neighbour of $u$ must also be a neighbour of $v$ and vice versa. Thus $u$ and $v$ are twins.
\end{proof}

We now proceed to show that some twins cannot be involved in Laplacian perfect state transfer.

\begin{thm}\label{lem:twins}
Suppose $X$ is neither the cycle on four vertices nor the complete graph $K_4$ minus one edge. If $u$ and $v$ are twins sharing exactly either one or two neighbours, then Laplacian perfect state transfer does not happen between $u$ and $v$.
\end{thm}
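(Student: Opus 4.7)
The plan is as follows. Since $u$ and $v$ are twins, the transposition $\sigma$ swapping them is a graph automorphism and hence commutes with $L$; the $-1$-eigenspace of $\sigma$ is the one-dimensional subspace $\mathrm{span}(\ee_u - \ee_v)$. A direct computation using $N(u)\setminus\{v\} = N(v)\setminus\{u\}$ gives $L(\ee_u - \ee_v) = \mu(\ee_u - \ee_v)$, where $\mu = k$ if $u \not\sim v$ and $\mu = k + 2$ if $u \sim v$, so $\mu \in \{1,2,3,4\}$ across the four configurations. Since $\ee_u - \ee_v$ lies in $E_\mu$, the projection $F_r$ annihilates it whenever $\lambda_r \neq \mu$, giving $F_r\ee_u = F_r\ee_v$ for every such $r$ and therefore $\Lambda_{uv}^- \subseteq \{\mu\}$. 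Lemma~\ref{cospec-sizes} promotes this to $\Lambda_{uv}^- = \{\mu\}$, and then~\eqref{eq:2} identifies $F_\mu\ee_u = \tfrac{1}{2}(\ee_u - \ee_v)$, so $\|F_\mu\ee_u\|^2 = \tfrac{1}{2}$.

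Next I would exploit the moment identities $\sum_r \|F_r\ee_u\|^2 = 1$ and $\sum_r \lambda_r\|F_r\ee_u\|^2 = \ee_u^T L\ee_u = \deg(u)$. Subtracting the contributions from $\lambda_0 = 0$ and $\lambda_r = \mu$ gives
\[
\sum_{\lambda_r \in \Lambda_u \setminus \{0,\mu\}} \|F_r\ee_u\|^2 \;=\; \tfrac{1}{2} - \tfrac{1}{n}, \qquad
\sum_{\lambda_r \in \Lambda_u \setminus \{0,\mu\}} \lambda_r \|F_r\ee_u\|^2 \;=\; \deg(u) - \tfrac{\mu}{2} \;=\; \tfrac{k}{2}.
\]
Theorem~\ref{pst-char}.(\ref{pst-char:item3}) requires $\mu/g$ odd and $\lambda_r/g$ even for every other $\lambda_r \in \Lambda_u$, so each such $\lambda_r$ is at least $2g$. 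Substituting yields $g\bigl(1 - \tfrac{2}{n}\bigr) \leq \tfrac{k}{2}$. When $\mu = 4$ the only admissible gcd is $g = 4$, forcing $n \leq 8/3$ and contradicting $n \geq 4$; similarly $\mu = 3$ with $g = 3$ gives $n \leq 12/5$, contradicting $n \geq 3$. In every surviving sub-case the bound reduces to $n \leq 4$.

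To finish I would enumerate the connected graphs on at most four vertices that carry the required twin structure. For $k=1$, $u \not\sim v$ the only candidates are $P_3$ and $K_{1,3}$; for $k=1$, $u \sim v$ they are $K_3$ and the triangle with a pendant glued at the common neighbour; for $k=2$, $u \not\sim v$ with $n = 4$ the only options are $C_4$ and $K_4 - e$, both excluded by hypothesis. In $P_3$ and the triangle-with-pendant, the set $\Lambda_u \setminus \{0, \mu\}$ contains the odd eigenvalue $3$ and $1$ respectively, violating Theorem~\ref{pst-char}.(\ref{pst-char:item3}). In $K_3$ and $K_{1,3}$, the eigenvalue $\mu$ has multiplicity two and the additional eigenvector in $E_\mu$ (namely $\ee_u + \ee_v - 2\ee_w$ in $K_3$, and $\ee_u + \ee_v - 2\ee_z$ in $K_{1,3}$, where $z$ denotes the third leaf) lies in the $+1$-eigenspace of $\sigma$ but has nonzero inner product with $\ee_u + \ee_v$, so $F_\mu\ee_u \neq \pm F_\mu \ee_v$, violating Theorem~\ref{pst-char}.(\ref{pst-char:item1}). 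The step I expect to require the most care is the symmetry argument in the first paragraph---pinning down that the $-1$-eigenspace of $\sigma$ equals $\mathrm{span}(\ee_u - \ee_v)$ and using this to extract $\|F_\mu\ee_u\|^2 = \tfrac{1}{2}$ cleanly from the PST hypothesis; once that is done, the moment inequality and the four-vertex case check are essentially mechanical.
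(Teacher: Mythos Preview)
Your argument is correct and follows the same skeleton as the paper's proof: identify $\ee_u-\ee_v$ as an eigenvector for $\mu=k+2\sigma$, conclude $\Lambda_{uv}^-=\{\mu\}$, derive a moment inequality over $\Lambda_{uv}^+\setminus\{0\}$, and finish with a finite check. The paper obtains the same identity $\sum_{\lambda_r\in\Lambda_{uv}^+}\lambda_r\,\ee_u^TF_r\ee_u = k/2$ via $\ee_S^TF_r\ee_u=(k-\lambda_r)\ee_u^TF_r\ee_u$, then bounds each nonzero $\lambda_r\in\Lambda_{uv}^+$ below by $k+1$, getting $n\le 2(k+1)$; for $k=2$ this leaves $n\in\{4,5,6\}$, which the paper disposes of by a \texttt{SAGE} search.

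Your refinement is to bound those $\lambda_r$ below by $2g$ rather than $k+1$, and to split on the admissible values of $g$ (forced by $\mu/g$ odd). This is genuinely sharper: for $\mu=4$ it forces $g=4$ and $n\le 8/3$, and for $\mu=2$ it forces $g=2$ and $n\le 4$, so the cases $n=5,6$ never arise and the computer check is unnecessary. The residual hand verification at $n\le 4$ is the same in spirit as the paper's $k=1$ check. In short: same strategy, but your version is self-contained.
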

\begin{center}
\includegraphics{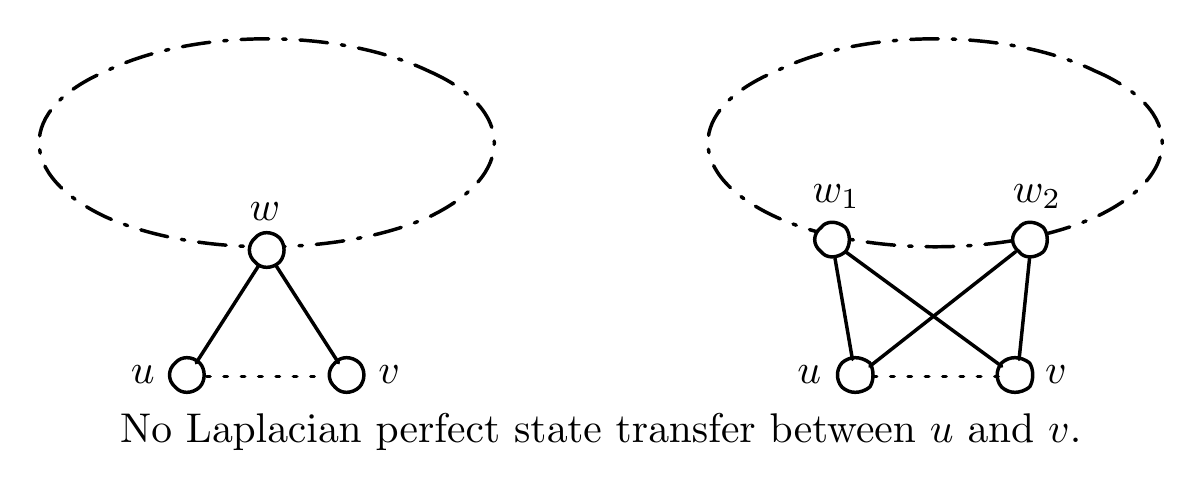}
\end{center}
\begin{proof}
Let $\sigma = 0$ if $u$ and $v$ are not neighbours, and $\sigma = 1$ if they are neighbours. Let $S$ be the set of common neighbours of $u$ and $v$, and $k = |S|$. By hypothesis, note that $k=1$ or $k=2$. Suppose Laplacian perfect state transfer happens between $u$ and $v$. Because $u$ and $v$ are twins, the vector that assigns $+1$ to $u$, $-1$ to $v$ and $0$ to all other vertices of the graph is an eigenvector of $L$ with eigenvalue $(k + 2 \sigma)$. All other eigenvectors are orthogonal to this one, thus  $F_r \ee_v = - F_r \ee_u$ if and only if $\lambda_r = k + 2 \sigma$.

Let $\ee_S$ be the characteristic vector of $S$. For all $\lambda_r \in \Lambda_{uv}^+$, it follows from $L (F_r\ee_u) = \lambda_r (F_r \ee_u)$ that
\[\ee_S^T F_r \ee_u = (k-\lambda_r) \ee_u^T F_r \ee_u.\]
From (\ref{eq:1}) and (\ref{eq:2}), we have
\[\sum_{\lambda_r \in \Lambda_{uv}^+} \ee_S F_r \ee_u = 0 \quad \text{and} \quad \sum_{\lambda_r \in \Lambda_{uv}^+} \ee_u F_r \ee_u = \frac{1}{2}.\]
Recall $\lambda_0 = 0 \in \Lambda_{uv}^+$ with $F_0 = (1/n) J$ and that $(k + 2 \sigma) \in \Lambda_{uv}^-$. By Theorem \ref{pst-char}, if $\lambda \in \Lambda_{uv}^+$ with $\lambda \neq 0$, then $\lambda$ is an integer and the power of two in the factorization of $\lambda$ is larger than the power of two in the factorization of $(k + 2 \sigma)$. In particular, it turns out that $\lambda \geq k+1$. So we have
\begin{align*}
0 = \sum_{\lambda_r \in \Lambda_{uv}^+} \ee_S F_r \ee_u & = \sum_{\lambda_r \in \Lambda_{uv}^+} (k-\lambda_r) \ee_u^T F_r \ee_u \\
& \leq \frac{1+k}{n} - \sum_{\lambda_r \in \Lambda_{uv}^+} \ee_u^T F_r \ee_u \\
& = \frac{1+k}{n} - \frac{1}{2}.
\end{align*}
If $k=1$, then $3\leq n \leq 4$. It is easy to manually check that Laplacian perfect state transfer does not happen in $K_3$, $P_3$, $K_{1,3}$ and $K_{1,3}$ plus one edge.

If $k=2$, then $4 \leq n \leq 6$. Computations carried out in \texttt{SAGE} show that the only graphs in this case in which Laplacian perfect state transfer occurs are the cycle on four vertices and $K_4$ minus one edge.
\end{proof}

If $X$ is a tree, twin vertices in $X$ must have a unique neighbour. As a consequence of our work in this section, we have the following corollary.

\begin{cor}\label{cor:twinstrees}
If Laplacian perfect state transfer happens in a tree between $u$ and $v$, then $|\Lambda_{uv}^-| \geq 2$.
\end{cor}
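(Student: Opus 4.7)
The plan is to combine Lemma~\ref{cospec-sizes} with Theorem~\ref{lem:twins}. Under the implicit assumption that $X$ is a tree on more than two vertices (required for Lemma~\ref{cospec-sizes} to apply; the path $P_2$ is the obvious exception), perfect state transfer between $u$ and $v$ already gives $|\Lambda_{uv}^-| \geq 1$, so the only thing left to rule out is the case $|\Lambda_{uv}^-| = 1$.

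I would argue by contradiction. Suppose $|\Lambda_{uv}^-| = 1$. The last sentence of Lemma~\ref{cospec-sizes} forces $u$ and $v$ to be twin vertices. Here I would invoke the tree structure: two vertices in a tree cannot share two or more common neighbours, since any pair of such neighbours $w_1, w_2$ would together with $u$ and $v$ form a $4$-cycle; moreover, if $u$ and $v$ had no common neighbour at all, then the twin condition $N(u) \setminus v = N(v) \setminus u = \emptyset$ would force $\{u,v\}$ to be a whole connected component, contradicting $|V(X)| > 2$ together with connectedness. So $u$ and $v$ must share exactly one common neighbour.

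This puts us squarely in the $k = 1$ case of Theorem~\ref{lem:twins}. Since a tree is certainly neither $C_4$ nor $K_4$ minus an edge, that theorem rules out perfect state transfer between $u$ and $v$, contradicting the standing assumption. Therefore $|\Lambda_{uv}^-| \geq 2$.

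Since the corollary follows almost formally from the two preceding results, I do not expect any serious obstacle. The only step that genuinely demands thought is the structural observation about twins in trees --- and the paper flags this one line before the statement, effectively handing it over for free.
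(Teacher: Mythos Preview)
Your argument is correct and follows the same route as the paper, which states the corollary as an immediate consequence of Lemma~\ref{cospec-sizes} and Theorem~\ref{lem:twins} after the one-line remark that twin vertices in a tree must share a unique neighbour. You supply more detail than the paper does---in particular the case analysis ruling out zero or two common neighbours, and the explicit flag that $P_2$ must be excluded---but the logical skeleton is identical.
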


\section{No Laplacian Perfect State Transfer in Trees}

Recall a classical result in graph theory.

\begin{thm}[Matrix-Tree Theorem] \label{matrixtree}
Let $X$ be a graph on $n$ vertices, and $u$ be any of its vertices. Let $L[u]$ denote the sub-matrix of $L$ obtained by deleting the row and the column indexed by $u$. Then the number of spanning trees of $X$ is equal to $\det L[u]$.
\end{thm}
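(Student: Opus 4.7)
The plan is to deploy the Cauchy-Binet formula after factoring $L$ through the signed incidence matrix. First I would orient the edges of $X$ arbitrarily and let $N$ be the $n \times m$ matrix with $N_{v,e} = +1$ if $v$ is the head of $e$, $N_{v,e} = -1$ if $v$ is the tail, and $N_{v,e} = 0$ otherwise. A brief direct computation confirms $L = NN^T$: the diagonal of $NN^T$ records the degrees, while the $(v,w)$ entry equals $-1$ when $vw \in E(X)$ and $0$ otherwise, independent of the chosen orientation. Writing $N[u]$ for $N$ with the row indexed by $u$ deleted, we therefore have $L[u] = N[u]\,N[u]^T$, and Cauchy-Binet yields
\[\det L[u] = \sum_{S} \det\bigl(N[u]_S\bigr)^2,\]
where $S$ ranges over all $(n-1)$-element subsets of $E(X)$ and $N[u]_S$ is the corresponding square submatrix.

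The task then reduces to showing that $\det\bigl(N[u]_S\bigr) = \pm 1$ precisely when $S$ is the edge set of a spanning tree, and $0$ otherwise; the claimed count will follow immediately. If the subgraph with edge set $S$ contains a cycle $C$, then summing the columns of $N[u]_S$ indexed by the edges of $C$, with signs matching a consistent traversal, produces the zero vector, so the columns are linearly dependent and the determinant vanishes. If $S$ is acyclic, then, having $n-1$ edges, it must be a spanning tree; I would induct on $n$ by choosing a leaf $\ell \neq u$ of $S$ (which exists because any tree on at least two vertices has at least two leaves), noting that the row of $N[u]_S$ indexed by $\ell$ has a single nonzero entry equal to $\pm 1$, and expanding the determinant along that row to reduce to the analogous spanning-tree determinant for $S - \ell$, which is still a tree containing $u$.

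The main obstacle, insofar as there is one, is the bookkeeping in the leaf-stripping induction, where one must verify at each stage that a leaf distinct from $u$ remains available; the Cauchy-Binet application is mechanical, the factorization $L = NN^T$ is a standard identity, and the cycle-dependence argument is just the observation that a cycle in an oriented graph produces a nontrivial linear relation among incidence-matrix columns.
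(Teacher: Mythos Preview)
The paper states the Matrix-Tree Theorem as a classical result and offers no proof of its own, so there is nothing to compare your argument against. Your proposal is the standard Cauchy--Binet proof and is correct as outlined: the factorization $L = NN^T$, the application of Cauchy--Binet to $L[u] = N[u]N[u]^T$, the cycle-dependence argument, and the leaf-stripping induction are all sound, and the bookkeeping concern you flag is harmless since a tree on at least two vertices always has at least two leaves, guaranteeing one distinct from $u$ at every stage.
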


We start showing a simple yet surprising application of Lemma \ref{cospec-sizes} below.

\begin{cor}\label{cor:oddodd}
If a graph $X$ has an odd number of vertices and an odd number of spanning trees, then Laplacian perfect state transfer does not happen in $X$.
\end{cor}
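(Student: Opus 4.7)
The plan is to argue by contradiction. Suppose Laplacian perfect state transfer occurs between some $u \neq v$ in $X$. Since $\tau(X)$ is odd and hence nonzero, $X$ is connected; and since $n$ is odd we have $n = 1$ (vacuous) or $n \geq 3$. In the nontrivial case, Lemma~\ref{cospec-sizes} applies and yields $|\Lambda_{uv}^+| \geq 2$, so there is an eigenvalue $\lambda \in \Lambda_{uv}^+$ with $\lambda \neq 0$. Theorem~\ref{pst-char} then tells us that $\lambda$ is an integer and that $\lambda/g$ is even, where $g = \gcd \Lambda_u$; in particular, $\lambda$ is a \emph{nonzero even integer} eigenvalue of $L$.

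The second step is to connect this even eigenvalue to the number of spanning trees via the characteristic polynomial $\chi_L(x) = \det(xI - L) \in \Z[x]$. Expanding in principal minors, the coefficient of $x^1$ in $\chi_L$ equals $(-1)^{n-1}$ times the sum of the $n$ principal $(n-1)\times(n-1)$ minors of $L$, each of which equals $\tau(X)$ by Theorem~\ref{matrixtree}. Since $0$ is a root of $\chi_L$, we may factor $\chi_L(x) = x\,q(x)$ with $q \in \Z[x]$ monic and $q(0) = \pm\, n\tau(X)$.

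Finally, since $\lambda \neq 0$ is an integer root of $\chi_L$, it is a root of $q$. As $q$ and $x-\lambda$ are monic elements of $\Z[x]$, polynomial division yields $q(x) = (x - \lambda)\,r(x)$ with $r \in \Z[x]$, so $\lambda \mid q(0) = \pm\, n\tau(X)$. Because $\lambda$ is even, this forces $n\tau(X)$ to be even, contradicting the assumption that $n$ and $\tau(X)$ are both odd.

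The only mildly delicate step is the first one: the hypothesis $n$ odd prevents the degenerate case $n=2$ excluded in Lemma~\ref{cospec-sizes}, and $\tau(X)$ odd guarantees connectedness, so the lemma is available and hands us a \emph{second} eigenvalue in $\Lambda_{uv}^+$ beyond $\lambda_0 = 0$. Once such an even integer eigenvalue has been produced, the Matrix-Tree input closes the argument essentially mechanically.
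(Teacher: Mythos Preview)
Your proof is correct and follows essentially the same route as the paper's: both combine Lemma~\ref{cospec-sizes} (to obtain a nonzero $\lambda\in\Lambda_{uv}^+$), Theorem~\ref{pst-char} (to force $\lambda$ even), and the Matrix-Tree Theorem (to tie integer eigenvalues to $n\tau(X)$) into a parity contradiction. The only difference is cosmetic: the paper phrases the spanning-tree input as ``$\prod_{\mu\neq 0}\mu = n\tau(X)$ is odd, hence every nonzero integer eigenvalue is odd,'' whereas you make that divisibility step explicit via $\chi_L(x)=x\,q(x)$ and $\lambda\mid q(0)$---a welcome bit of extra care, since not all eigenvalues need be integers.
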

\begin{proof}
Let $u$ and $v$ be any two vertices of $X$. By Theorem \ref{pst-char}.(\ref{pst-char:item3}), the elements in $\Lambda_{uv}^+$ must be even in order for Laplacian perfect state transfer to happen between $u$ and $v$. 

Note that the choice of $u$ in Theorem \ref{matrixtree} is irrelevant, so by using the Laplace expansion of a determinant, it is easy to observe that the number of spanning trees in a graph with $n$ vertices is equal to the product of its non-zero Laplacian eigenvalues (with repetition) scaled by $(1/n)$.

Therefore all non-zero integral Laplacian eigenvalues of $X$ are going to be odd, thus $\Lambda_{uv}^+$ contains only the eigenvalue $\lambda_0 = 0$. By Lemma \ref{cospec-sizes}, Laplacian perfect state transfer cannot happen.
\end{proof}

Computations carried out in \texttt{SAGE} show that among the 853 connected graphs on seven vertices, 339 have an odd number of spanning trees. Graphs with an odd number of spanning trees are precisely those graphs that contain no non-empty bicycle and are also known as pedestrian graphs. See Berman \cite{BermanBicycles} for more details. An example of a construction of pedestrian graphs consists of taking the successive $1$-sums of odd cycles and edges.
\begin{center}
\includegraphics{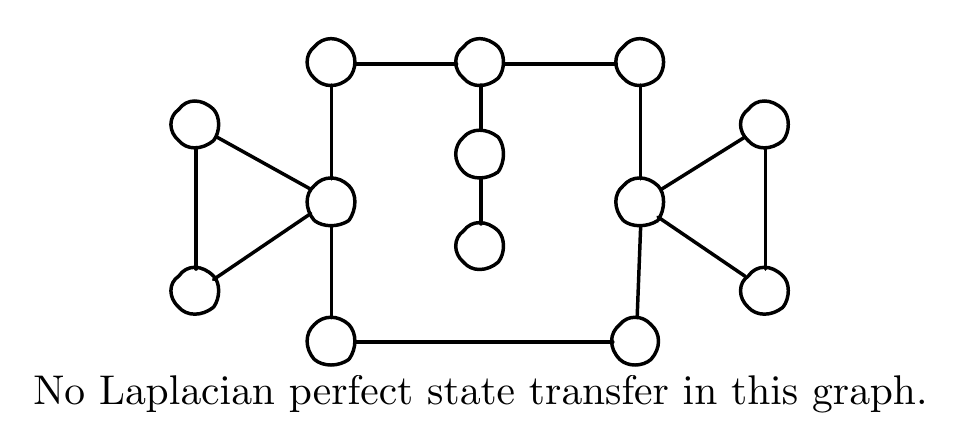}
\end{center}

Corollary \ref{cor:oddodd} rules out Laplacian perfect state transfer in trees with an odd number of vertices. We will now work on extending this result to all trees on more than two vertices. We will accomplish this by showing that integer eigenvalues are very bad candidates to belong to $\Lambda_{uv}^-$.

\begin{thm}\label{poweroftwo}
Suppose $X$ is a graph on $n$ vertices whose number of spanning trees is a power of two. Let $\lambda \neq 0$ be an integer eigenvalue of $X$. If $\lambda \in \Lambda_{uv}^-$, then $\lambda$ is a power of two.
\end{thm}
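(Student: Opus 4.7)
The plan is to derive a contradiction from the assumption that some odd prime $p$ divides $\lambda$, by computing the integer
\[
\adj(L - \lambda I)_{uu} + \adj(L - \lambda I)_{uv}
\]
in two different ways.

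The starting point is the classical consequence of the Matrix-Tree Theorem that $\adj(L) = \tau(X)\, J$ for a connected graph. Indeed, $\ker L = \text{span}(\j)$ (the graph is connected since $\tau(X) \geq 1$), so $\adj(L) \cdot L = 0$ together with the symmetry of $\adj(L)$ forces $\adj(L) = c J$, and the trace identity $\text{tr}(\adj L) = \sum_u \det L[u] = n\tau(X)$ pins down $c = \tau(X)$.

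Suppose for contradiction that $p$ is an odd prime with $p \mid \lambda$. Then $L - \lambda I \equiv L \pmod p$, and since every entry of an adjugate is a polynomial in the matrix entries, $\adj(L - \lambda I) \equiv \tau(X) J \pmod p$. In particular,
\[
\adj(L - \lambda I)_{uu} + \adj(L - \lambda I)_{uv} \equiv 2\tau(X) \pmod p.
\]
I would next show that this sum is actually $0$ in $\Z$. If $\lambda$ has multiplicity at least two, then $L - \lambda I$ has rank at most $n - 2$ and the entire adjugate vanishes. If instead $\lambda$ is a simple eigenvalue, then by the standard formula for the adjugate of a symmetric matrix with a simple zero eigenvalue, $\adj(L - \lambda I) = c \, F_\lambda$ with $c = \prod_{\lambda_r \neq \lambda} (\lambda_r - \lambda)^{m_r}$. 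The hypothesis $\lambda \in \Lambda_{uv}^-$ reads $(F_\lambda)_{uv} = -(F_\lambda)_{uu}$, and hence $\adj(L - \lambda I)_{uv} = -\adj(L - \lambda I)_{uu}$, so the sum is again zero.

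Combining the two computations gives $2\tau(X) \equiv 0 \pmod p$; since $p$ is odd this forces $p \mid \tau(X)$, contradicting the hypothesis that $\tau(X)$ is a power of two. Therefore $\lambda$ admits no odd prime divisor, and being a positive integer, it must be a power of two. The substantive step is the mod-$p$ congruence $\adj(L - \lambda I) \equiv \adj(L) \pmod p$ combined with the identity $\adj(L) = \tau(X) J$; the case split on whether $\lambda$ is a simple or repeated eigenvalue is routine, since in the repeated case the adjugate vanishes automatically and the role of $\lambda \in \Lambda_{uv}^-$ is only needed to handle the simple case.
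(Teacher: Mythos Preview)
Your argument is correct and reaches the same contradiction as the paper, but via a different route. The paper picks an integer eigenvector $\yy$ for $\lambda$ with coprime entries, notes that $L\yy\equiv 0\pmod p$, and uses the Matrix-Tree Theorem to conclude that $L$ has rank $n-1$ over $\Z_p$; hence $\yy$ must reduce to a multiple of $\j$ modulo $p$, and then $\yy_u=-\yy_v$ (which follows for \emph{every} $\lambda$-eigenvector from $F_\lambda(\ee_u+\ee_v)=0$) forces all entries to vanish mod $p$, contradicting coprimality.

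You instead package the same information into the adjugate: from $\adj(L)=\tau(X)J$ and $L-\lambda I\equiv L\pmod p$ you get $\adj(L-\lambda I)\equiv\tau(X)J\pmod p$, while over $\Z$ the adjugate is either zero (repeated eigenvalue) or a scalar multiple of $F_\lambda$ (simple eigenvalue), so $\adj(L-\lambda I)_{uu}+\adj(L-\lambda I)_{uv}=0$. Your approach has the virtue of never needing to produce an explicit integer eigenvector or argue about its gcd, and it handles the repeated-eigenvalue case automatically; the paper's approach is a bit more direct in that it works entirely with one concrete vector and the rank statement, without the spectral formula for the adjugate. The two arguments are really dual encodings of the same fact: the $\lambda$-eigenspace over $\Z$ and the kernel of $L$ over $\Z_p$ must coincide modulo $p$, and the sign condition in $\Lambda_{uv}^-$ is incompatible with being constant.
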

\begin{proof}
One of the alternative definitions for the rank of a matrix is that it is the largest order of any non-zero minor of the matrix. From Theorem \ref{matrixtree}, it follows that the rank of $L$ is equal to $(n-1)$ over any field $\Z_p$ with $p$ an odd prime.

Now let $\yy$ be an integer eigenvector for $\lambda$, chosen with the property that the greatest common divisor of its entries is equal to $1$. Suppose that there is an odd prime $p$ dividing $\lambda$. It follows that
\[L \yy \equiv 0 \pmod p.\]
Because the rank of $L$ over $\Z_p$ is $(n-1)$, the vector $\yy$ must be a scalar multiple of $\j$ over $\Z_p$, say $\yy \equiv k \j \pmod p$. If $\lambda \in \Lambda_{uv}^-$, the projection of $\ee_u$ onto any subspace of the $\lambda$-eigenspace must be the negative of the projection of $\ee_v$. Thus $\yy_u = -\yy_v$. Therefore
\[k \equiv -k \pmod p,\] 
and because $p$ is odd, the only possible solution is $k \equiv 0 \pmod p$. This contradicts the fact that the greatest common divisor of the entries of $\yy$ is equal to $1$.

Therefore if $\lambda \in \Lambda_{uv}^-$, no odd prime divides $\lambda$.
\end{proof}

\begin{cor}
No tree on more than two vertices admits Laplacian perfect state transfer.
\end{cor}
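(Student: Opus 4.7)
The plan is to combine Corollary \ref{cor:twinstrees} (which forces $|\Lambda_{uv}^-| \geq 2$ in a tree) with Theorem \ref{poweroftwo} (which forces the elements of $\Lambda_{uv}^-$ to be powers of two) together with the parity condition in Theorem \ref{pst-char}.(\ref{pst-char:item3}), arriving at a contradiction.

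First I would observe that a tree on $n$ vertices has exactly one spanning tree, so in particular this number is $2^0$, a power of two. Hence Theorem \ref{poweroftwo} applies to any tree. Suppose for contradiction that Laplacian perfect state transfer occurs in a tree $X$ on $n > 2$ vertices, between vertices $u$ and $v$. By Theorem \ref{pst-char}.(\ref{pst-char:item2}), every element of $\Lambda_u$ is an integer, so in particular every $\lambda \in \Lambda_{uv}^- \subseteq \Lambda_u$ is a non-zero integer eigenvalue (non-zero because $0 \in \Lambda_{uv}^+$). Theorem \ref{poweroftwo} then tells us that every such $\lambda$ is a power of two.

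Next I would examine $g = \gcd \{\lambda_r : \lambda_r \in \Lambda_u\}$ from Theorem \ref{pst-char}.(\ref{pst-char:item3}). Since $g$ divides every element of $\Lambda_{uv}^-$, and those elements are all powers of two, $g$ must itself be a power of two. Therefore, for any $\lambda \in \Lambda_{uv}^-$, the quotient $\lambda / g$ is a power of two. But Theorem \ref{pst-char}.(\ref{pst-char:item3}b) requires this quotient to be odd, and the only odd power of two is $1$. Hence $\lambda = g$ for every $\lambda \in \Lambda_{uv}^-$. Since the eigenvalues are distinct, this forces $|\Lambda_{uv}^-| \leq 1$.

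This directly contradicts Corollary \ref{cor:twinstrees}, which asserts $|\Lambda_{uv}^-| \geq 2$ whenever Laplacian perfect state transfer occurs in a tree. Therefore no tree on more than two vertices admits Laplacian perfect state transfer. The main work has already been done in the preceding sections; the only subtle point in this final argument is ensuring that $g$ itself is forced to be a power of two, which follows cleanly because $g$ is the gcd of a set containing only powers of two (the elements of $\Lambda_{uv}^-$).
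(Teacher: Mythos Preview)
Your proof is correct and follows essentially the same line as the paper's: use Theorem \ref{poweroftwo} to force every eigenvalue in $\Lambda_{uv}^-$ to be a power of two, combine with the odd-quotient condition in Theorem \ref{pst-char}.(\ref{pst-char:item3}) to conclude $|\Lambda_{uv}^-| \leq 1$, and contradict Corollary \ref{cor:twinstrees}. One small inaccuracy: in your final sentence you describe $g$ as ``the gcd of a set containing only powers of two (the elements of $\Lambda_{uv}^-$)'', but $g$ is the gcd of all of $\Lambda_u$, not just of $\Lambda_{uv}^-$; your earlier (and correct) justification --- that $g$ divides some power of two and hence is itself a power of two --- is the one to keep.
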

\begin{proof}
Let $u$ and $v$ be arbitrary vertices of the tree. Suppose that there are two distinct Laplacian eigenvalues $\lambda$ and $\mu$ belonging to  $\Lambda_{uv}^-$. If $g$ is the greatest common divisor of all non-zero elements in $\Lambda_u$, then Theorem \ref{pst-char}.(\ref{pst-char:item3}) implies that both $(\lambda / g)$ and $(\mu / g)$ must be odd integers in order for Laplacian perfect state transfer to happen. However Theorem \ref{poweroftwo} implies that $\lambda$ and $\mu$ are two distinct powers of two. Thus if Laplacian state transfer occurs, there can only be one eigenvalue in $\Lambda_{uv}^-$. But that is not possible according to Corollary \ref{cor:twinstrees}.
\end{proof}

\section{No Laplacian Perfect State Transfer in Other Cases}

We remark that our technology can be applied to other situations. The following result is an example.

\begin{cor}
  Suppose $X$ is a graph on $n$ vertices, $n > 4$. Suppose that the number of spanning trees of $X$ is a power of  two, and that $X$ admits Laplacian perfect state transfer between $u$ and $v$. Then $u$ and $v$ are twins with at least three common neighbours. Moreover, let $k$ be the number of common neighbours of $u$ and $v$. If $u \nsim v$, then $k$ is a power of two. If $u \sim v$, then $k+2$ is a power of two.
\end{cor}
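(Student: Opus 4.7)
The plan is to mirror the strategy used to rule out Laplacian perfect state transfer in trees, and then extract extra structural information from the specific eigenvalue associated with the twin relation.

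First I would show that $|\Lambda_{uv}^-| = 1$ and hence, by Lemma~\ref{cospec-sizes}, that $u$ and $v$ are twins. Theorem~\ref{pst-char}(\ref{pst-char:item2}) guarantees that every element of $\Lambda_u$ is an integer, so Theorem~\ref{poweroftwo} applies and forces every element of $\Lambda_{uv}^-$ to be a power of two. If two distinct eigenvalues $\lambda = 2^a$ and $\mu = 2^b$ belonged to $\Lambda_{uv}^-$, then setting $g = \gcd\{\lambda_r \in \Lambda_u : \lambda_r \neq 0\}$ (which divides both $\lambda$ and $\mu$, hence is itself a power of two), Theorem~\ref{pst-char}(\ref{pst-char:item3}) would require $\lambda/g$ and $\mu/g$ both to be odd. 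The only odd power of two is $1$, so $\lambda = \mu = g$, contradicting distinctness.

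Next I would identify the unique element of $\Lambda_{uv}^-$. Because $u$ and $v$ are twins, a direct computation shows $L(\ee_u - \ee_v) = (k + 2\sigma)(\ee_u - \ee_v)$, where $k$ is the number of common neighbours and $\sigma \in \{0,1\}$ records whether $u \sim v$. Comparing with $\zz^- = \tfrac{1}{2}(\ee_u - \ee_v)$ from (\ref{eq:2}), the vector $\zz^-$ is an eigenvector for eigenvalue $k + 2\sigma$, and this must be the sole element of $\Lambda_{uv}^-$. A second application of Theorem~\ref{poweroftwo} then gives that $k + 2\sigma$ is a power of two, which is the ``moreover'' clause: $k$ is a power of two if $u \nsim v$, and $k + 2$ is a power of two if $u \sim v$.

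Finally, to rule out $k \leq 2$: the cases $k = 1$ and $k = 2$ are excluded by Theorem~\ref{lem:twins}, whose only exceptions $C_4$ and $K_4 - e$ have four vertices, contradicting $n > 4$. The case $k = 0$ cannot arise either, because the power-of-two hypothesis on spanning trees forces $X$ to be connected, and twins with no common neighbour in a connected graph on more than two vertices are impossible. Hence $k \geq 3$. The argument is essentially a direct generalisation of the tree case; the only points that require a moment's care are verifying the eigenvalue computation for $\ee_u - \ee_v$ and noting that $g$ inherits the property of being a power of two, both of which are routine.
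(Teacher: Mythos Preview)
Your proposal is correct and follows essentially the same route as the paper: use Theorem~\ref{poweroftwo} to force every element of $\Lambda_{uv}^-$ to be a power of two, combine with Theorem~\ref{pst-char}(\ref{pst-char:item3}) to conclude $|\Lambda_{uv}^-|=1$, then invoke Lemma~\ref{cospec-sizes} and Theorem~\ref{lem:twins} to get that $u,v$ are twins with $k\geq 3$, and finally read off that the unique element $k+2\sigma$ of $\Lambda_{uv}^-$ is a power of two. Your write-up is in fact slightly more careful than the paper's, since you explicitly dispose of the $k=0$ case (via connectivity) and spell out why $g$ must itself be a power of two; both are easy but worth saying.
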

\begin{proof}
By Theorem \ref{poweroftwo}, all eigenvalues in $\Lambda_{uv}^-$ are powers of two. By Theorem \ref{pst-char}.(\ref{pst-char:item3}), it follows that $|\Lambda_{uv}^-| = 1$. From Lemma \ref{cospec-sizes}, it follows that $u$ and $v$ are twins, and by Theorem \ref{lem:twins}, they have at least three common neighbours. Finally, let $\lambda$ be the eigenvalue in $\Lambda_{uv}^-$. By Theorem \ref{poweroftwo}, $\lambda$ is a power of two. If $u \nsim v$, then $\lambda = k$. If $u \sim v$, then $\lambda = k+2$.
\end{proof}

Computations carried out in \texttt{SAGE} show that among the 853 connected graphs on seven vertices, the number of spanning trees of 83 of them is a power of two. Among these, 58 contain twin vertices with one or two neighbours in common. There are 11117 connected graphs on eight vertices. The number of spanning trees of 360 of them is a power of two. The corollary above rules Laplacian perfect state transfer in at least one pair of vertices on 247 graphs among them.
\begin{center}
\includegraphics{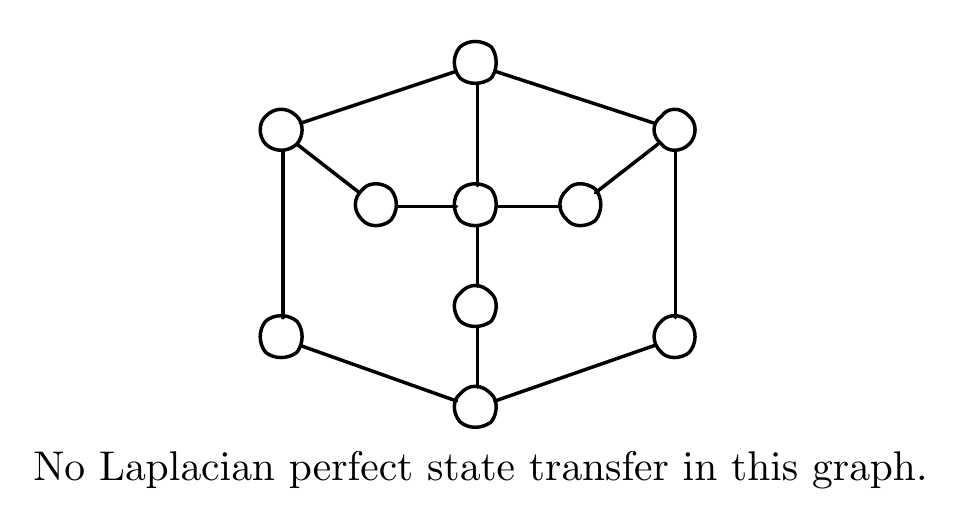}
\end{center}

We also make an observation that might be useful to rule out Laplacian perfect state transfer in bipartite graphs.

\begin{thm}
  If $X$ is a bipartite graph in which Laplacian perfect state
  transfer occurs, then the largest Laplacian eigenvalue $\lambda$ of
  $X$ must be an integer. Moreover, $u$ and $v$ are in the same colour
  class if and only if $\lambda \in \Lambda_{uv}^+$.
\end{thm}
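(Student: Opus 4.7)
The plan is to reduce the Laplacian of a bipartite graph to the signless Laplacian $Q = D + A$ through the colour-class signing, and then invoke Perron-Frobenius. Assume $X$ is connected (otherwise one works inside the component containing $u$ and $v$, since PST takes place within a single component). Let $(U,W)$ be the bipartition of $X$, let $\mathbf{s}$ be the $\pm 1$ vector that is $+1$ on $U$ and $-1$ on $W$, and set $S = \mathrm{diag}(\mathbf{s})$. A one-line check gives $SAS = -A$ and $SDS = D$, hence $SLS = D + A = Q$. Thus $L$ and $Q$ share the same spectrum, and $\yy$ is a $\lambda$-eigenvector of $L$ if and only if $S\yy$ is a $\lambda$-eigenvector of $Q$.

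Let $\lambda$ denote the largest Laplacian eigenvalue of $X$. Since $Q$ is a non-negative irreducible matrix, Perron-Frobenius asserts that $\lambda$ is a simple eigenvalue of $Q$ with a strictly positive eigenvector $\mathbf{p}$. Setting $\yy = S\mathbf{p}$ yields a $\lambda$-eigenvector of $L$ whose entries are positive on $U$ and negative on $W$. In particular $\yy_u \neq 0$ for every vertex $u$, which forces $\lambda \in \Lambda_u$, and Theorem~\ref{pst-char}.(\ref{pst-char:item2}) then requires $\lambda$ to be an integer. For the colour-class characterisation, simplicity of $\lambda$ gives $F_\lambda = \yy \yy^T / (\yy^T \yy)$, so $F_\lambda \ee_u$ and $F_\lambda \ee_v$ are nonzero scalar multiples of $\yy$ with signs determined by $\yy_u$ and $\yy_v$. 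By Theorem~\ref{pst-char}.(\ref{pst-char:item1}), they satisfy $F_\lambda \ee_u = \pm F_\lambda \ee_v$, with sign equal to $\mathrm{sgn}(\yy_u \yy_v)$. Since $\yy$ is sign-constant on each colour class, this sign is $+$ precisely when $u$ and $v$ lie in the same class, which is the stated equivalence.

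The substantive step is identifying $L$ as similar to the non-negative matrix $Q$ and using Perron-Frobenius to obtain a top eigenvector of $L$ that is sign-constant on each colour class; the rest reduces to bookkeeping with Theorem~\ref{pst-char}. The only mild subtlety I foresee is the disconnected case, where one should remark that the relevant largest eigenvalue is that of the component containing $u$ and $v$, rather than of $X$ as a whole.
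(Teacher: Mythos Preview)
Your argument is correct and is essentially the paper's own proof: both conjugate $L$ by the $\pm 1$ colour-class diagonal to obtain the signless Laplacian $Q=D+A$, invoke Perron--Frobenius on the irreducible non-negative matrix $Q$ to get a simple top eigenvalue with a sign-constant-per-class eigenvector of $L$, and then read off integrality and the $\Lambda_{uv}^+$ membership from Theorem~\ref{pst-char}. Your write-up is in fact a bit more explicit (the computation $SLS=Q$, the formula $F_\lambda=\yy\yy^T/\yy^T\yy$, and the remark on the disconnected case) than the paper's version.
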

\begin{proof}
  First note that the Laplacian $L = D - A$ is similar to the signless
  Laplacian $Q = D + A$. More specifically, if $\Sigma$ is the
  diagonal matrix where $\Sigma_{u,u} = \pm 1$ depending on the colour
  class of vertex $u$, then $\Sigma L \Sigma^{-1} = Q$. The signless
  Laplacian is a non-negative matrix, and it is irreducible. Hence we
  can apply the Perron-Frobenius theory (\cite[Section
  2.2]{BrouwerHaemers}) to argue that the $\lambda$-eigenspace of $L$
  is one-dimensional and spanned by an everywhere non-zero vector which
  is positive on one colour class and negative on the other. From
  Theorem \ref{pst-char}, it follows that $\lambda$ must be an integer, and
  belongs to $\Lambda_{uv}^+$ if and only if the entries corresponding
  to $u$ and $v$ in its eigenvector have the same sign.
\end{proof}

We checked in \texttt{SAGE} that among the 182 bipartite graphs on
eight vertices, the largest Laplacian eigenvalue is an integer in only
10 of them.

\section{Adjacency Perfect State Transfer in Bipartite Graphs}

We denote the spectral decomposition of the adjacency matrix of a graph $X$ by
\[A = \sum_{r = 0}^d \theta_r E_r,\]
with the understanding that $\theta_0 > ... > \theta_d$. Given two vertices $u$ and $v$, we say that an eigenvalue $\theta_r$ is in the \textit{eigenvalue support} of $u$ if $E_r \ee_u \neq 0$. We will denote the eigenvalue support of $u$ by $\Phi_u$. We also define $\Phi_{uv}^+ \subset \Phi_u$ to be such that $\theta_r \in \Phi_{uv}^+$ if and only if $E_r \ee_u = E_r \ee_v$, and correspondingly $\Phi_{uv}^-$ to be such that $\theta_r \in \Phi_{uv}^-$ if and only if $E_r \ee_u = - E_r \ee_v$. 

Similarly to what happened in Theorem \ref{pst-char}, a necessary condition for perfect state transfer between $u$ and $v$ here is that $E_r \ee_u = \pm E_r \ee_v$ for all $r$, or equivalently, $\Phi_u = \Phi_{uv}^+ \cup \Phi_{uv}^-$. Vertices $u$ and $v$ satisfying this condition with respect to the adjacency matrix are called \textit{strongly cospectral}.

The following result characterizes the nature of eigenvalues in the support of vertices involved in perfect state transfer.

\begin{thm}[Godsil \cite{GodsilPerfectStateTransfer12}, Theorem 6.1] \label{esupportgodsil}
If $X$ admits perfect state transfer between $u$ and $v$, then the non-zero elements in $\Phi_u$ are either all integers or all quadratic integers. Moreover, there is a square-free integer $\Delta$, an integer $a$, and integers $b_r$ such that
\[\theta_r \in \Phi_u \implies \theta_r = \frac{1}{2} \big( a + b_r \sqrt{\Delta} \big).\]
Here we allow $\Delta = 1$ for the case where all eigenvalues are integers, and $a =0$ for the case where they are all integer multiples of $\sqrt{\Delta}$. \qed
\end{thm}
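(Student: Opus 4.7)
The plan is to convert the transfer equation into arithmetic constraints on $\Phi_u$, use those to locate all eigenvalues on a rational affine line, and then invoke the Galois action to pin them down inside at most a quadratic extension of $\Q$. First I would substitute the spectral decomposition $U_A(t) = \sum_r e^{\ii t \theta_r} E_r$ into $U_A(t)\ee_u = \gamma \ee_v$ and use the strong cospectrality hypothesis $E_r \ee_v = \epsilon_r E_r \ee_u$, with $\epsilon_r \in \{\pm 1\}$, to deduce $e^{\ii t \theta_r} = \gamma \epsilon_r$ for every $\theta_r \in \Phi_u$. Taking ratios of two such identities gives $t(\theta_r - \theta_s) \in \pi\Z$, so all differences $\theta_r - \theta_s$ with $\theta_r, \theta_s \in \Phi_u$ are pairwise rationally proportional. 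Fixing a reference pair $\theta_{r_0}, \theta_{s_0}$, I can then write every element of $\Phi_u$ uniquely as $\alpha + q_r \beta$, where $\alpha = \theta_{r_0}$ and $\beta = \theta_{r_0} - \theta_{s_0}$ are algebraic integers and $q_r \in \Q$.

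Next I would bring in Galois theory. Since the characteristic polynomial of $A$ has integer coefficients, every eigenvalue is an algebraic integer, and each spectral idempotent $E_r$, being a Lagrange interpolation polynomial in $A$, satisfies $\sigma(E_r) = E_{\sigma(\theta_r)}$ for every $\sigma \in \mathrm{Gal}(\overline{\Q}/\Q)$. Applying $\sigma$ entry-wise to the relation $E_r \ee_u \neq 0$ shows that $\sigma$ permutes $\Phi_u$, so $K = \Q(\Phi_u)$ is Galois over $\Q$ with some group $G$. Pulling $\sigma$ through the affine description $\theta_r = \alpha + q_r \beta$ and comparing with $\sigma(\theta_r) \in \Phi_u$ forces relations of the form $\sigma(\beta) = c_\sigma \beta$ and $\sigma(\alpha) = \alpha + d_\sigma \beta$ with $c_\sigma, d_\sigma \in \Q$. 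The cocycle identity $d_{\sigma\tau} = d_\sigma + c_\sigma d_\tau$, combined with finiteness of $G$, forces $c_\sigma \in \{\pm 1\}$ (the only torsion in $\Q^\times$) and $d_\sigma = 0$ on the kernel $H := \ker(c_\bullet)$, since an additive homomorphism from a finite group into $\Q$ is trivial.

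The conclusion then follows from a two-case analysis. If $H = G$, then $\alpha$ and $\beta$ are both fixed by $G$, so they lie in $\Q$; every $\theta_r$ is then a rational algebraic integer, hence a genuine integer, and we recover the statement with $\Delta = 1$. If $[G:H] = 2$, then $\beta \notin \Q$ but $\beta^2 \in \Q \cap \mathcal{O}_K = \Z$, so $\beta = s\sqrt{\Delta}$ for a unique square-free positive integer $\Delta$ and some $s \in \Z$; moreover $\alpha \in K^H = \Q(\sqrt{\Delta})$. Every $\theta_r \in \Phi_u$ then lies in $\mathcal{O}_{\Q(\sqrt{\Delta})}$, so has the shape $(a_r + b_r\sqrt{\Delta})/2$ with $a_r, b_r \in \Z$; matching this against $\alpha + q_r \beta$ shows that the rational part $a_r/2$ equals the rational part of $\alpha$ for every $r$, yielding the stated form with a single integer $a$.

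The step I expect to be most delicate is the Galois portion: one has to justify carefully that $\sigma$ really does permute $\Phi_u$ (so that the affine structure $\theta_r = \alpha + q_r\beta$ interacts well with $\sigma$), and then extract the clean dichotomy $c_\sigma \in \{\pm 1\}$ from the cocycle relation. Once these are in place, the two-case conclusion reduces to the standard description of the ring of integers of a quadratic number field.
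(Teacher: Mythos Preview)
The paper does not actually prove this theorem: it is quoted from Godsil \cite{GodsilPerfectStateTransfer12} and closed with a bare \qed, so there is no ``paper's own proof'' to compare against. That said, your argument is sound and is essentially the standard one. The two main ingredients you use --- that perfect state transfer forces all differences $\theta_r-\theta_s$ in $\Phi_u$ to be rational multiples of one another, and that $\mathrm{Gal}(\overline{\Q}/\Q)$ permutes $\Phi_u$ via $\sigma(E_r)=E_{\sigma(\theta_r)}$ --- are exactly the ideas behind Godsil's proof; your packaging of the second step through the cocycle $d_{\sigma\tau}=d_\sigma+c_\sigma d_\tau$ and the observation that a homomorphism from a finite group into $\Q^\times$ lands in $\{\pm1\}$ is a clean way to reach the quadratic dichotomy.

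A couple of small remarks. First, you should note explicitly that $|\Phi_u|\ge 2$ so that a reference pair $\theta_{r_0}\neq\theta_{s_0}$ exists; this is immediate since perfect state transfer between distinct vertices forces both $\Phi_{uv}^+$ and $\Phi_{uv}^-$ to be nonempty. Second, in the $[G:H]=2$ case you conclude $\beta^2\in\Q$; it is worth saying that $\beta^2$ is then a rational algebraic integer, hence in $\Z$, before writing $\beta=s\sqrt{\Delta}$. With these minor additions the argument is complete.
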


If the eigenvalues are integers, the following result is the adjacency matrix version of Theorem \ref{pst-char}.
\begin{thm} \label{pst-char-adj}
Let $X$ be a graph and $u$ and $v$ two of its vertices. Suppose the elements of $\Phi_u$ are integers. Then there is perfect state transfer between $u$ and $v$ at time $t \in \R^+$ and phase $\gamma \in \C$ if and only if the following conditions hold.
\begin{enumerate}[(i)]
\item Vertices $u$ and $v$ are strongly cospectral. \label{pst-char-adj-item1}
\item \label{pst-char-adj-item2} Let $\displaystyle g = \gcd \left\{ \theta_0 - \theta_r  : \theta_r \in \Phi_u\right\}$. Then 
\begin{enumerate}[a)]
\item $\theta_r \in \Phi_{uv}^+$ if and only if $\dfrac{\theta_0 - \theta_r}{g}$ is even, and
\item $\theta_r \in \Phi_{uv}^-$ if and only if $\dfrac{\theta_0 - \theta_r}{g}$ is odd.
\end{enumerate}
\end{enumerate} 
Moreover, $t$ is an odd multiple of $(\pi / g)$, and $\gamma = \e^{\ii t \theta_0}$.
\qed
\end{thm}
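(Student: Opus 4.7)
The plan is to adapt the argument behind Theorem \ref{pst-char} to the adjacency matrix, using the spectral decomposition $A = \sum_{r=0}^d \theta_r E_r$. Suppose $\exp(\ii t A)\ee_u = \gamma \ee_v$. Since the idempotents $E_r$ are pairwise orthogonal and commute with $\exp(\ii t A)$, projecting onto each eigenspace yields the key identity
\[ \e^{\ii t \theta_r} E_r \ee_u = \gamma E_r \ee_v \]
for every $r \in \{0,\ldots,d\}$. Everything that follows is extracted from this one relation.

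First I would read off condition (i). Taking squared norms in the key identity and summing over $r$ gives $\|\ee_u\|^2 = |\gamma|^2 \|\ee_v\|^2$, so $|\gamma|=1$ and $\|E_r \ee_u\| = \|E_r \ee_v\|$ for every $r$. Whenever $E_r \ee_u \neq 0$, the scalar $\e^{\ii t \theta_r}/\gamma$ is a unit complex number that sends the real vector $E_r \ee_u$ to the real vector $E_r \ee_v$, so it must be $\pm 1$. Therefore $E_r \ee_v = \pm E_r \ee_u$ for every $r \in \Phi_u$, which is strong cospectrality.

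Next, to pin down $\gamma$ and extract condition (ii), I would invoke Perron--Frobenius on the (assumed connected) graph $X$: the $\theta_0$-eigenspace is one-dimensional and spanned by an everywhere-positive vector, so $\theta_0 \in \Phi_u$ and $E_0 \ee_u = E_0 \ee_v$. Setting $r=0$ in the key identity yields $\gamma = \e^{\ii t \theta_0}$, and consequently $\e^{\ii t(\theta_0 - \theta_r)} = \pm 1$, with sign $+$ exactly when $\theta_r \in \Phi_{uv}^+$. Since every $\theta_r$ is an integer by hypothesis, there exist integers $k_r$ with $t(\theta_0-\theta_r) = k_r \pi$ and $k_r$ even iff $\theta_r \in \Phi_{uv}^+$. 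Writing $m_r = (\theta_0 - \theta_r)/g$, so that $\gcd_r\{m_r\}=1$, the relation $k_r = (tg/\pi)\,m_r$ forces $s := tg/\pi$ to be an integer; because $\Phi_{uv}^-$ is non-empty (else $\ee_u = \ee_v$) some $k_r$ must be odd, so $s$ is odd and $k_r, m_r$ share parity. This is condition (ii) and simultaneously the ``moreover'' clause $t = s\pi/g$.

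For the converse I would reverse the computation. Given (i) and (ii), set $t = \pi/g$ and $\gamma = \e^{\ii t \theta_0}$, and for each $r \in \Phi_u$ let $\epsilon_r \in \{\pm 1\}$ be defined by $E_r \ee_v = \epsilon_r E_r \ee_u$. Condition (ii) reads $(-1)^{m_r} = \epsilon_r$, so
\[ \e^{\ii t \theta_r} E_r \ee_u = \e^{\ii t \theta_0}\,\e^{-\ii \pi m_r} E_r \ee_u = \gamma\, \epsilon_r E_r \ee_u = \gamma E_r \ee_v, \]
and summing over $r \in \Phi_u$ recovers $\exp(\ii t A)\ee_u = \gamma \ee_v$. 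The only subtle point I anticipate is the role of connectedness: without it the Perron--Frobenius step fails, and one must first note that $\exp(\ii t A)$ is block-diagonal across components, so $u$ and $v$ must lie in the same component and the argument applies there. The rest is a direct translation of the Laplacian argument, with $\theta_0 - \theta_r$ playing the role of the Laplacian eigenvalue $\lambda_r$ because now the energy reference is the top eigenvalue rather than zero.
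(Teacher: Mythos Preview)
Your proof is correct and complete. Note, however, that the paper does not actually prove this theorem: it is stated with a terminal \qed\ and introduced as ``the adjacency matrix version of Theorem~\ref{pst-char}'', itself presented without proof as a compilation of well-known facts. So there is no paper proof to compare against; you have supplied the standard argument that the paper omits.

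A couple of minor remarks on your write-up. Your justification that $s = tg/\pi$ is an integer deserves one more word: from $k_r = s\,m_r$ with the $m_r$ coprime integers, pick a B\'ezout combination $\sum a_r m_r = 1$ to get $s = \sum a_r k_r \in \Z$. Also, your appeal to Perron--Frobenius tacitly assumes $X$ is connected; you flag this at the end, and indeed restricting to the component of $u$ is harmless, but strictly speaking one should then read $\theta_0$ as the largest eigenvalue in $\Phi_u$ rather than the global one. In the context of this paper all graphs are connected, so the point is moot. Otherwise the argument is exactly the expected one: project onto eigenspaces, use reality of $E_r\ee_u$ to force signs, anchor the phase at the Perron eigenvalue, and reduce to a parity condition on $(\theta_0-\theta_r)/g$.
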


Suppose $X$ is a bipartite graph, in which case the adjacency matrix of $X$ can be written as
\begin{align} A(X) = \begin{pmatrix}0 & B \\ B^T & 0 \end{pmatrix}. \label{bipgraphs:eq:bipartitionofA} \end{align}
If $\zz$ is an eigenvector for $A(X)$, we split it according to the blocks of $A$ as $\zz = (\zz_1 , \zz_2)$. This immediately implies that if $\theta$ is an eigenvalue for a bipartite graph $X$ with corresponding eigenvector $(\zz_1,\zz_2)$, then $-\theta$ is an eigenvalue with eigenvector $(\zz_1,-\zz_2)$. As a consequence, we have the following result.

\begin{lem}\label{bipgraphs:lem:esupport}
If $X$ is a bipartite graph and $u \in V(X)$ is involved in perfect state transfer, then no eigenvalue in the support of $u$ is of the form $\frac{a + b \sqrt{\Delta}}{2}$ for non-zero integers $a$ and $b$ with $\Delta$ a square-free larger than $1$.
\end{lem}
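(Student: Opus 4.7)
The plan is to combine the bipartite spectral symmetry with the rigid structural result of Theorem \ref{esupportgodsil}. First I would establish that the eigenvalue support $\Phi_u$ of any vertex in a bipartite graph is symmetric under negation. Let $\Sigma$ be the diagonal signing matrix with $\Sigma_{ww} = \pm 1$ according to the color class of $w$. The block decomposition (\ref{bipgraphs:eq:bipartitionofA}) yields $\Sigma A \Sigma = -A$, so conjugating the spectral decomposition term by term shows that $\Sigma E_r \Sigma$ is the spectral projection of $A$ onto the $(-\theta_r)$-eigenspace. Since $\Sigma \ee_u = \pm \ee_u$ and $\Sigma$ is invertible, $E_r \ee_u \neq 0$ if and only if $\Sigma E_r \Sigma \ee_u \neq 0$, and therefore $\theta_r \in \Phi_u$ if and only if $-\theta_r \in \Phi_u$.

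Next I would invoke Theorem \ref{esupportgodsil} to fix the common parameters across $\Phi_u$: since $u$ is involved in perfect state transfer, there is a square-free integer $\Delta$ and an integer $a$ such that every non-zero $\theta_r \in \Phi_u$ has the form $\tfrac{1}{2}(a + b_r \sqrt{\Delta})$ with $b_r \in \Z$. Suppose for contradiction that some $\theta_r = \tfrac{1}{2}(a + b \sqrt{\Delta}) \in \Phi_u$ has $a \neq 0$, $b \neq 0$, and $\Delta > 1$. The symmetry from the first step forces $-\theta_r \in \Phi_u$, and so $-\theta_r = \tfrac{1}{2}(a + b' \sqrt{\Delta})$ for some integer $b'$. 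Adding the two expressions gives $2a + (b + b')\sqrt{\Delta} = 0$; since $\Delta > 1$ is square-free, $\sqrt{\Delta}$ is irrational, which forces both $a = 0$ and $b' = -b$, contradicting $a \neq 0$.

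I do not anticipate any serious obstacle: the only delicate point is verifying that $\Sigma E_r \Sigma$ is the spectral projection for $-\theta_r$, but this is immediate because each $E_r$ is a polynomial in $A$ and conjugation by $\Sigma$ sends $A$ to $-A$, hence swaps the roles of $\theta_r$ and $-\theta_r$ in the spectral decomposition. Once the negation-symmetry of $\Phi_u$ is in hand, the conclusion is an irrationality argument in $\Q(\sqrt{\Delta})$.
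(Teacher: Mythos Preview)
Your argument is correct. It differs from the paper's proof in an interesting way, so a brief comparison is worthwhile.

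Both proofs begin with the bipartite symmetry: if $\theta \in \Phi_u$ then $-\theta \in \Phi_u$. After that, the paper proceeds by noting that the algebraic conjugate $\overline{\theta} = \tfrac{1}{2}(a - b\sqrt{\Delta})$ also lies in $\Phi_u$, and then appeals to the external \emph{ratio condition} of Godsil \cite{GodsilPeriodicGraphs11}: for any three eigenvalues in the support of a periodic vertex, the ratio $(\theta - \overline{\theta})/(\theta + \overline{\theta}) = b\sqrt{\Delta}/a$ must be rational, which is impossible when $a,b \neq 0$ and $\Delta>1$. Your route instead exploits the structural content of Theorem~\ref{esupportgodsil} directly: since all non-zero elements of $\Phi_u$ share the \emph{same} rational part $a/2$, having both $\theta_r$ and $-\theta_r$ in $\Phi_u$ forces $a=0$ immediately by an irrationality argument in $\Q(\sqrt{\Delta})$. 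This is more self-contained, as it avoids the separate citation to the ratio condition; on the other hand, the paper's approach makes the connection to periodicity explicit and would still go through if one only knew Theorem~\ref{esupportgodsil} in a weaker form where the $a$ were allowed to vary with $r$.
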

\begin{proof}
Suppose $\theta = \frac{a + b \sqrt{\Delta} }{2}$ is in the support of $u$. Then its algebraic conjugate $\overline{\theta} = \frac{a - b \sqrt{\Delta}}{2}$ is also in the support, and by the observation above, the values $-\theta$ and $-\overline{\theta}$ are also eigenvalues in the support of $u$. The ratio condition (Godsil \cite[Theorem 2.2]{GodsilPeriodicGraphs11}) states that
\[\frac{\theta - \overline{\theta}}{\theta - (-\overline{\theta})} \in \Q,\] a contradiction.
\end{proof}

\section{No Perfect State Transfer in Certain Bipartite Graphs}

For more details about the next result, see Godsil \cite{GodsilInversesTrees}.

\begin{thm} \label{bipgraphs:thm:invertiblegraphs}
If $X$ is a bipartite graph with a unique perfect matching, then $A(X)$ is invertible and its inverse is an integer matrix. If $X$ is a tree, then $A(X)$ is invertible if and only if $X$ has a (unique) perfect matching.
\end{thm}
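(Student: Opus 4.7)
The plan is to exploit the block structure $A(X) = \begin{pmatrix} 0 & B \\ B^T & 0 \end{pmatrix}$ from equation (\ref{bipgraphs:eq:bipartitionofA}), where $B$ is the bipartite biadjacency matrix with rows indexed by one colour class $L$ and columns by the other colour class $R$. First I would observe that the rank of $A(X)$ is at most $2\,\mathrm{rank}(B) \le 2\min\{|L|,|R|\}$, so invertibility of $A(X)$ forces $|L|=|R|$ at the outset. Whenever $B$ is square of size $n \times n$, a short block-matrix manipulation (swap the two block rows and the two block columns) gives $\det A(X) = \pm \det(B)^2$, so $A(X)$ is invertible if and only if $B$ is, and the block-inverse formula
\[A(X)^{-1} = \begin{pmatrix} 0 & (B^T)^{-1} \\ B^{-1} & 0 \end{pmatrix}\]
reduces everything about $A(X)^{-1}$ to properties of $B^{-1}$.

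The next step is to expand $\det(B)$ via the Leibniz formula. Because $B$ is a $0/1$-matrix, a permutation $\sigma$ of $\{1,\ldots,n\}$ contributes a non-zero summand precisely when $B_{i,\sigma(i)}=1$ for every $i$, and such a $\sigma$ is exactly the datum of a perfect matching of $X$. Hence
\[\det(B) \;=\; \sum_{M \text{ perfect matching of } X} \operatorname{sgn}(\sigma_M).\]
For the first assertion, a unique perfect matching forces $\det(B) = \pm 1$, so $B \in \mathrm{GL}_n(\Z)$ with $B^{-1} = \pm\operatorname{adj}(B)$ an integer matrix; by the block inverse formula above, $A(X)^{-1}$ is integer-valued as claimed.

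For the second assertion I would argue as follows. A tree has at most one perfect matching, because the symmetric difference of two distinct perfect matchings is a non-empty vertex-disjoint union of even cycles, contradicting acyclicity. Hence for a tree, having a perfect matching coincides with having a unique one, and the first part of the theorem then supplies invertibility of $A(X)$. Conversely, if $A(X)$ is invertible, then $|L|=|R|$ and $\det(B) \neq 0$, so at least one term of the Leibniz expansion is non-zero, exhibiting a perfect matching of $X$.

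The main obstacle, such as it is, is keeping straight that the non-vanishing terms in the Leibniz expansion of $\det(B)$ are in a \emph{genuine} bijection with the perfect matchings of $X$, and that in the tree case no cancellation among signs can occur because there is only a single such term. Once these combinatorial observations are in place the remainder is standard linear algebra, and the whole argument is essentially Godsil's.
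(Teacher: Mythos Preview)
Your argument is correct and is the standard one: the permanent-like Leibniz expansion of $\det(B)$ picks out perfect matchings, a unique matching forces $\det(B)=\pm 1$ and hence $B^{-1}\in M_n(\Z)$, and for trees acyclicity forbids two distinct perfect matchings via the symmetric-difference argument. One small point worth making explicit is that the rank identity $\operatorname{rank}A(X)=2\operatorname{rank}B$ you appeal to implicitly follows from the observation that the kernel of $A(X)$ splits as $\ker B^T\oplus\ker B$; with that in hand, invertibility of $A(X)$ indeed forces $|L|=|R|$ and $B$ square invertible.

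As for comparison: the paper does not give its own proof of this theorem at all. It is stated as a known result and attributed to Godsil~\cite{GodsilInversesTrees}, so there is no in-paper argument to compare against. Your proof is essentially the classical one, and your closing remark that ``the whole argument is essentially Godsil's'' is apt.
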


As a consequence of the theorem above, we have the following.

\begin{thm}\label{bipgraphs:thm:nopstinvertible}
Except for the path on two vertices, no connected bipartite graph with a unique perfect matching admits perfect state transfer.
\end{thm}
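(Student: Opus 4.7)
The plan is to leverage Theorem~\ref{bipgraphs:thm:invertiblegraphs}: since $X$ is a bipartite graph with a unique perfect matching, $A = A(X)$ is invertible and $A^{-1}$ is an integer matrix, hence $\det A = \pm 1$. Because $A$ is also an integer matrix, every eigenvalue $\theta_r$ of $A$ satisfies that both $\theta_r$ and $1/\theta_r$ are algebraic integers; equivalently, each $\theta_r$ is a unit in the ring of algebraic integers. Since $X$ is connected, Perron--Frobenius guarantees that $\theta_0 > 0$ and that $\theta_0 \in \Phi_u$ for every $u \in V(X)$.

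Next I would suppose, for contradiction, that perfect state transfer occurs between two vertices $u$ and $v$. By Theorem~\ref{esupportgodsil}, every $\theta_r \in \Phi_u$ has the form $\tfrac{1}{2}(a + b_r \sqrt{\Delta})$ for a fixed integer $a$, a fixed square-free integer $\Delta \geq 1$, and varying integers $b_r$. If $\Delta > 1$, then Lemma~\ref{bipgraphs:lem:esupport} requires that for each such $\theta_r$ either $a = 0$ or $b_r = 0$; since $a$ is common to all eigenvalues in $\Phi_u$, the only options are $a = 0$, or $a \neq 0$ with $b_r = 0$ for every $r$. The latter would make $\Phi_u = \{a/2\}$, which forces $\ee_u$ to lie in the one-dimensional $\theta_0$-eigenspace and hence to be a scalar multiple of the everywhere-positive Perron vector, impossible for $n > 1$. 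Thus either $\Delta = 1$, in which case each $\theta_r \in \Phi_u$ is an ordinary integer, or $\Delta > 1$ and $a = 0$.

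In the $\Delta > 1$ branch, $\theta_0 = \tfrac{1}{2} b_0 \sqrt{\Delta}$ with $b_0 \geq 1$, and its norm in $\Q(\sqrt{\Delta})$ is $-b_0^2 \Delta / 4$; the unit condition demands $|{-b_0^2 \Delta / 4}| = 1$, i.e.\ $b_0^2 \Delta = 4$, which has no solution with $\Delta > 1$ squarefree and $b_0$ a positive integer. So the integer branch must hold, and $\theta_0$ is a positive unit in $\Z$, forcing $\theta_0 = 1$. To finish, I would invoke the Rayleigh bound $\theta_0 \geq \j^T A \j / (\j^T \j) = 2m/n$; since $X$ has a perfect matching, $m \geq n/2$, so $\theta_0 \geq 1$, with equality only when $X$ is $1$-regular. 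A connected $1$-regular graph on at least two vertices is $P_2$, yielding the desired contradiction.

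The main obstacle, I expect, is the case split at the start of the second paragraph: one must carefully use the fact that the integer $a$ of Theorem~\ref{esupportgodsil} is common across $\Phi_u$, and rule out the degenerate possibility that $\Phi_u$ collapses to a single value. The rest is a short algebraic-integer and Rayleigh-quotient computation.
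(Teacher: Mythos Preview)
Your proof is correct and follows essentially the same approach as the paper: both use Theorem~\ref{bipgraphs:thm:invertiblegraphs} to force the support eigenvalues to be algebraic-integer units, combine Theorem~\ref{esupportgodsil} with Lemma~\ref{bipgraphs:lem:esupport} to eliminate the genuinely quadratic case, and then reduce to $P_2$. The only cosmetic difference is the endgame: the paper applies the unit argument to every $\theta \in \Phi_u$ to get $\Phi_u \subseteq \{+1,-1\}$ and concludes directly, whereas you apply it only to $\theta_0$ and finish with the Rayleigh bound $\theta_0 \ge 2m/n$.
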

\begin{proof}
Suppose $X$ is a bipartite graph with a unique perfect matching, and that $u$ is involved in perfect state transfer. Let $\theta$ be an eigenvalue in the support of $u$, and recall from Theorem \ref{esupportgodsil} that $\theta$ is a quadratic integer. By Theorem \ref{bipgraphs:thm:invertiblegraphs}, $(1/\theta) $ must be an algebraic integer, and so $\theta$ is either $+1$, $-1$, or of the form $(a + b \sqrt{\Delta})/2$ \ with $a$ and $b$ non-zero. Lemma \ref{bipgraphs:lem:esupport} excludes the latter case, and hence the only eigenvalues in the support of $u$ are $+1$ and $-1$. It is easy to see in this case that the connected component containing $u$ is equal to $P_2$, and so the result follows.
\end{proof}

The result above allows us to rule out perfect state transfer for a large class of trees. We can work a bit more in the case where perfect state transfer happens between vertices in different classes of the bipartition. 

\begin{lem} \label{bipgraphs:lem:difclassessupint}
If $X$ is a bipartite graph admitting perfect state transfer between $u$ and $v$, and if $u$ and $v$ are in different classes, then their support contains only integer eigenvalues.
\end{lem}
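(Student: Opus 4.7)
The plan is to argue by contradiction, leveraging the Perron--Frobenius structure to force a coordinate of the Perron eigenvector to be simultaneously rational and irrational.

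Suppose $\Phi_u$ contains a non-integer eigenvalue. Combining Theorem \ref{esupportgodsil} with Lemma \ref{bipgraphs:lem:esupport} forces every non-zero element of $\Phi_u$ to have the form $b_r\sqrt{\Delta}/2$ for some non-zero integer $b_r$ and a fixed square-free $\Delta > 1$. Because the eigenvalues of the integer matrix $A$ are algebraic integers and $\Delta$ is square-free, a short check on $\Delta \bmod 4$ shows that every such $b_r$ must be even. Writing $b_r = 2c_r$, every non-zero eigenvalue in $\Phi_u$ takes the cleaner form $c_r\sqrt{\Delta}$ with $c_r \in \Z \setminus \{0\}$.

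I would then restrict attention to the connected component of $X$ containing $u$ and $v$ and invoke Perron--Frobenius: the largest eigenvalue $\theta_0$ is simple and its eigenvector $z$ is strictly positive, so $\ee_u^T E_{\theta_0} \ee_u$ is proportional to $z(u)^2 > 0$ and hence $\theta_0 \in \Phi_u$. Our assumption then forces $\theta_0 = c_0\sqrt{\Delta}$ for some positive integer $c_0$. Using the bipartite block form \eqref{bipgraphs:eq:bipartitionofA} and writing $z = (z_1, z_2)$ accordingly, $z_1$ is a Perron eigenvector of the integer matrix $BB^T$ for the positive integer eigenvalue $\theta_0^2 = c_0^2\Delta$, so it may be rescaled to have strictly positive integer entries. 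The identity $B^T z_1 = c_0 \sqrt{\Delta}\, z_2$ then makes every entry of $z_2$ a positive rational multiple of $\sqrt{\Delta}$, hence positive irrational.

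Finally, because $\theta_0$ is simple, $E_{\theta_0}$ is a scalar multiple of $zz^T$, so the strong-cospectrality condition $E_{\theta_0}\ee_u = \pm E_{\theta_0}\ee_v$ reduces (using positivity of the Perron vector) to $z(u) = z(v)$. Since $u$ and $v$ lie in different colour classes, one of these entries is a positive rational while the other is a positive irrational, which is impossible. The only step requiring some care is the algebraic-integer parity check that upgrades $b_r\sqrt{\Delta}/2$ to $c_r\sqrt{\Delta}$; without it the entries of $z_1$ might only be half-integers and the rational-versus-irrational contrast in the last line would be less immediate.
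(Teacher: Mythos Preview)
Your argument is correct and rests on the same core observation as the paper: for an eigenvalue of the form $c\sqrt{\Delta}$ with $\Delta>1$ square-free, an eigenvector can be taken with rational entries on one colour class and $\sqrt{\Delta}$-times-rational entries on the other, so a vertex in each class cannot have matching entries and strong cospectrality fails.

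The only real difference is scope. The paper applies this observation to an \emph{arbitrary} eigenvalue $b\sqrt{\Delta}$ in the support: it notes that the eigenspace is spanned by vectors of the shape $(\zz_1,\sqrt{\Delta}\,\zz_2)$ with $\zz_1,\zz_2$ rational, and then the strong-cospectrality requirement $w_u=\pm w_v$ for every eigenvector $w$ immediately fails. You instead single out the Perron eigenvalue $\theta_0$, invoking Perron--Frobenius to guarantee both that $\theta_0\in\Phi_u$ and that its eigenspace is one-dimensional, which lets you reduce $E_{\theta_0}\ee_u=\pm E_{\theta_0}\ee_v$ cleanly to $z(u)=z(v)$. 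This buys you a very explicit contradiction at the cost of the extra Perron--Frobenius and irreducibility-of-$BB^T$ machinery; the paper's version is shorter but slightly less careful (its phrase ``any eigenvector'' really means ``a spanning set of eigenvectors''). Your parity check that upgrades $b_r\sqrt{\Delta}/2$ to $c_r\sqrt{\Delta}$ is a detail the paper silently absorbs by writing the eigenvalue as $b\sqrt{\Delta}$ from the outset.
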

\begin{proof}
From Lemma \ref{bipgraphs:lem:esupport}, we need only to consider the case where $b\sqrt{\Delta}$ is in the support of $u$. Because $-b\sqrt{\Delta}$ is the algebraic conjugate of $b \sqrt{\Delta}$, it follows that any eigenvector for $b\sqrt{\Delta}$ can be partitioned as $(\zz_1,\sqrt{\Delta} \zz_2)$, where $\zz_1$ and $\zz_2$ are rational vectors. As a consequence, the absolute value of the entries in the $u$th and $v$th position are different, and so these vertices cannot be strongly cospectral, a necessary condition for perfect state transfer.
\end{proof}

Now recall Equation \ref{bipgraphs:eq:bipartitionofA}, and observe that it implies that
\[U_A(t) = \left(\begin{array}{cc}
\cos(t\sqrt{BB^T}) & \ii \sin(t \sqrt{BB^T}) B \\ \ii \sin(t \sqrt{B^TB}) B^T & \cos(t \sqrt{B^TB})\end{array}\right).\]
As a consequence, if perfect state transfer happens in a bipartite graph between vertices in different classes, it must happen with phase $\pm \ii$. We use that to prove the following result.

\begin{thm}
Suppose $X$ is bipartite, perfect state transfer happens between $u$ and $v$ at time $t$, and $u$ and $v$ belong to different classes. Then the powers of two in the factorizations of the eigenvalues in the support of $u$ are all equal. In particular, $0$ cannot be in the support of $u$.
\end{thm}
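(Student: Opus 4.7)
The plan is to combine three facts already established: the observation just before the statement that PST between vertices in different color classes must have phase $\gamma = \pm \ii$; Lemma \ref{bipgraphs:lem:difclassessupint}, which guarantees that every eigenvalue in $\Phi_u$ is then an ordinary integer; and the strong cospectrality condition $E_r \ee_u = \pm E_r \ee_v$ for each $\theta_r \in \Phi_u$.

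First I would translate the phase condition into an eigenvalue-by-eigenvalue equation. Decomposing $\exp(\ii t A) \ee_u = \gamma \ee_v$ spectrally and using the orthogonality of the ranges of the $E_r$ yields $\e^{\ii t \theta_r} = \pm \gamma = \pm \ii$ for every $\theta_r \in \Phi_u$, with the sign matching membership in $\Phi_{uv}^+$ or $\Phi_{uv}^-$. This already disposes of the second assertion: if $0 \in \Phi_u$, the corresponding equation would read $1 = \pm \ii$, a contradiction.

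For the main assertion, the condition $\e^{\ii t \theta_r} = \pm \ii$ forces $t \theta_r$ to be an odd integer multiple of $\pi/2$, so $\theta_r = (2m_r + 1)\, c$ for a single real number $c = \pi/(2t)$ and integers $m_r$ indexed by the support. Integrality of the $\theta_r$ makes $c$ rational; writing $c = p/q$ in lowest terms, the identity $q\, \theta_r = (2m_r+1)\, p$ together with $\gcd(p,q) = 1$ forces $q \mid (2m_r+1)$, so $q$ is odd and $\theta_r = p \cdot a_r$ where $a_r = (2m_r+1)/q$ is an odd integer. The power of two dividing $\theta_r$ is therefore the power of two dividing $p$, independent of $r$.

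I do not anticipate a serious obstacle. The only delicate point is producing $\gamma = \pm \ii$ from the block form of $U_A(t)$ displayed just above the statement, and then bookkeeping carefully the two signs on $\Phi_{uv}^+$ versus $\Phi_{uv}^-$ so that the unified conclusion $\e^{\ii t \theta_r} = \pm \ii$ is correctly justified. Once that is in hand, the remainder is elementary arithmetic in $\Z$.
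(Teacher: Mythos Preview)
Your argument is correct. It is also somewhat more direct than the paper's. Both proofs start from the observation that the phase must be $\gamma=\pm\ii$, but the paper then invokes Theorem~\ref{pst-char-adj} to pin down $t$ as an odd multiple of $\pi/2^{\alpha+1}$ (where $2^{\alpha}\Vert\theta_0$), and uses the bipartite pairing $\theta_r\leftrightarrow-\theta_r$ to produce the system of congruences
\[
\theta_0\equiv 2^{\alpha}\pmod{2^{\alpha+1}},\qquad
\theta_0-\sigma\theta_r\equiv 0\pmod{2^{\alpha+2}},\qquad
\theta_0+\sigma\theta_r\equiv 2^{\alpha+1}\pmod{2^{\alpha+2}},
\]
from which $\theta_r\equiv 2^{\alpha}\pmod{2^{\alpha+1}}$ follows. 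Your route bypasses both Theorem~\ref{pst-char-adj} and the explicit use of the $\pm\theta_r$ symmetry: once $\e^{\ii t\theta_r}=\pm\ii$ is known for every $\theta_r\in\Phi_u$, the common odd-multiple-of-$\pi/2$ structure drops out immediately, and writing $c=\pi/(2t)=p/q$ in lowest terms yields $\theta_r=p\cdot(\text{odd})$ with no further input. The paper's approach exhibits the congruences more explicitly, which may be useful if one wants finer $2$-adic information; your approach is shorter and uses only the raw spectral identity $\e^{\ii t\theta_r}E_r\ee_u=\gamma E_r\ee_v$.
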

\begin{proof}
We saw that perfect state transfer must happen in this case with phase $\pm \ii$. Let $2^\alpha$ be the largest power of two dividing $\theta_0$. It follows from Theorem \ref{pst-char-adj} that $\e^{\ii t \theta_0} = \pm \ii$, thus $t$ is an odd multiple of $(\pi/ 2^{\alpha+1})$. Let $\theta_r$ be an eigenvector in the support of $u$, and denote $\theta_{-r} = - \theta_r$.

Because $u$ and $v$ are in different classes (but are strongly cospectral), $E_r \ee_u = \sigma E_r \ee_v$ and $E_{-r} \ee_u = -\sigma E_{-r} \ee_v$ with $\sigma = \pm 1$, and so $t (\theta_0 - \sigma\theta_r)$ is an even multiple of $\pi$, whereas $t (\theta_0 + \sigma\theta_r)$ is an odd multiple of $\pi$. All together, we have the following three equations:
\begin{align*}
\theta_0 & \equiv 2^{\alpha} \pmod {2^{\alpha+1} },\\
\theta_0 - \sigma\theta_r & \equiv 0 \pmod {2^{\alpha+2} },\\
\theta_0+\sigma\theta_r & \equiv 2^{\alpha+1} \pmod{ 2^{\alpha+2}}.
\end{align*}

From that it follows that $\theta_r$ is also congruent to $2^\alpha \pmod{2^{\alpha+1}}$, and that $0$ cannot be in the support of $u$.
\end{proof}

All the results in this paper suggest that very restrictive conditions must hold for perfect state transfer to happen in bipartite graphs. In the context of trees, we could successfully rule out Laplacian perfect state transfer. For the adjacency case, we showed that all trees with a perfect matching do not admit perfect state transfer. We carried out computations in \texttt{SAGE} to check all trees up to ten vertices and we did not find any examples of perfect state transfer except if the tree is $P_2$ or $P_3$. We therefore propose the following conjecture.

\begin{conjecture}
No tree except for $P_2$ and $P_3$ admits (adjacency) perfect state transfer.
\end{conjecture}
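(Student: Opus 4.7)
The plan is to split according to the two cases already singled out by the previous section: whether $u$ and $v$ lie in different color classes of the bipartition of $T$ or in the same class. Before doing that, I would use Theorem \ref{bipgraphs:thm:nopstinvertible} to dispose of the trees that have a perfect matching: since a tree has at most one perfect matching, and since by Theorem \ref{bipgraphs:thm:invertiblegraphs} a tree $T$ is invertible exactly when it admits one, the only trees left to rule out are those for which $0$ is an eigenvalue of $A(T)$. This reduction is essential because then every argument in what follows can exploit the (reasonably rich) structure of the kernel of $A(T)$.

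For the different-class case, I would combine the last theorem of Section 7 with the classical description of $\ker A(T)$ for trees. That theorem says that if $u,v$ lie in different classes then $0\notin\Phi_u$; but for a tree without a perfect matching, a vertex $w$ satisfies $E_0\mathbf{e}_w\neq 0$ precisely when $w$ is omitted by some maximum matching of $T$, and the Gallai--Edmonds decomposition of a tree makes this set very explicit. The hope is that requiring both $u$ and $v$ to be covered by every maximum matching, together with the parity constraint that they lie in different classes and the strongly cospectral condition $\varphi(T\setminus u)=\varphi(T\setminus v)$, forces enough structure to contradict the integrality of $\Phi_u$ (the previous section also pins down a common power of two for the eigenvalues in $\Phi_u$, which is a very strong constraint on a tree's integer spectrum).

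For the same-class case, the two main tools developed in Section 7 do not directly apply: the perfect matching obstruction of Theorem \ref{bipgraphs:thm:nopstinvertible} has been exhausted, and the power-of-two theorem assumed different classes. I would attempt to proceed by leveraging strong cospectrality via the interlacing behaviour of tree characteristic polynomials. The identity $\varphi(T\setminus u)=\varphi(T\setminus v)$ combined with the fact that $T\setminus u$ and $T\setminus v$ are forests means that the eigenvalues of the subtrees hanging off $u$ and those hanging off $v$ agree. Then I would push integrality: Lemma \ref{bipgraphs:lem:esupport} excludes the mixed form $\tfrac{a+b\sqrt{\Delta}}{2}$, and in the same-class case the ratio condition applied to $\theta,-\theta\in\Phi_u$ allows a parallel argument to force $\Phi_u$ to consist only of integers. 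Finally I would hope to conclude using the well-known rarity of integer eigenvalues in trees, together with the strongly cospectral constraint, to reduce to a finite check, which the authors' computations up to ten vertices then complete.

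The main obstacle is clearly the same-class case. The different-class case has a clean handle because the bipartite structure forces $0\notin\Phi_u$, which meshes perfectly with the absence of a perfect matching, whereas in the same-class case there is no such sharp obstruction: $0$ may well lie in $\Phi_u$, the power-of-two theorem gives nothing, and strongly cospectral pairs in the same class of a tree are genuine (e.g.\ the two leaves of $P_3$). A full proof probably requires a new algebraic identity linking strong cospectrality in a tree to a combinatorial invariant rigid enough to kill all but the smallest examples; absent such an identity, the conjecture is likely to stand as stated.
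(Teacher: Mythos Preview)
The statement you are addressing is a \emph{conjecture} in the paper, not a theorem: the authors do not prove it and explicitly present it as open, supported only by the partial results of Sections~6--7 and by computer search up to ten vertices. There is therefore no proof in the paper to compare your attempt against.

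Your proposal is, appropriately, not a proof either: it is a strategy outline, and you yourself identify the essential gap. The different-class case is in reasonable shape---the combination of Theorem~\ref{bipgraphs:thm:nopstinvertible}, the last theorem of Section~7, and the Gallai--Edmonds structure of trees without perfect matchings does give genuine traction, though you have not actually closed it. The same-class case, however, is only a list of ingredients (strong cospectrality, interlacing, integrality via Lemma~\ref{bipgraphs:lem:esupport}) with no mechanism to combine them; the hope that ``rarity of integer eigenvalues in trees'' plus a finite check up to ten vertices settles the matter is unfounded, since nothing in your argument bounds the size of a putative counterexample. Your final sentence---that absent a new algebraic identity the conjecture is likely to stand---is an accurate self-assessment and matches the paper's own position.
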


\section*{Acknowledgement}

Both authors are grateful to Chris Godsil for extremely helpful
discussions about quantum walks in graphs and for asking the question
of whether perfect state transfer happens in trees. This work was done
while G.C. was a PhD student and H.L. was an Undergraduate Research
Assistant working under Chris's supervision.

\bibliography{qwalks}{}
\bibliographystyle{plain}

\end{document}